\newtheorem{thm}{Theorem}
\newtheorem{cor}[thm]{Corollary}
\newtheorem{proposition}[thm]{Proposition}
\newtheorem{example}{Example}
\newtheorem{lem}[thm]{Lemma}
\DeclareMathOperator{\pw}{pwd}
\DeclareMathOperator{\dc}{dc}
\DeclareMathOperator{\re}{re}
\DeclareMathOperator{\bw}{bw}
\newcommand{\bR}{\mathbb{R}}
\begin{document}

\begin{center}
{\LARGE A note on dilation coefficient, plane-width,\\
        and resolution coefficient of graphs}

~\\

{{\bf Martin Milani\v c},\\
University of Primorska---FAMNIT, Glagolja\v ska 8, 6000 Koper, Slovenia,
martin.milanic@upr.si
\\
{\bf Toma\v z Pisanski},\\
University of Ljubljana, IMFM, Jadranska 19, 1111 Ljubljana, Slovenia,
and\\
University of Primorska---PINT, Muzejski trg 2, 6000 Koper, Slovenia, \\
Tomaz.Pisanski@fmf.uni-lj.si\\
{\bf Arjana \v Zitnik}\\
University of Ljubljana, IMFM, Jadranska 19, 1111 Ljubljana, Slovenia,
Arjana.Zitnik@fmf.uni-lj.si}
\end{center}

\noindent {\bf Keywords:} dilation coefficient; plane-width; resolution coefficient;
graph representation; graph drawing; chromatic number; circular chromatic number; bandwidth.
\medskip

\noindent{\bf Math. Subj. Class. (2000) 05C62}

\begin{abstract}
In this note we study and compare three graph invariants related to the
`compactness' of graph drawing in the plane: the {\em dilation coefficient}, defined as the smallest possible quotient between the longest and the shortest edge length;
the \emph{plane-width}, which is the smallest possible quotient between the largest
distance between any two points and the shortest length of an edge; and
the \emph{resolution coefficient}, the smallest possible quotient between
the longest edge length and the smallest distance between any two points.
These three invariants coincide for complete graphs.

We show that graphs with large dilation coefficient or plane-width have a vertex with large valence but there exist cubic graphs with arbitrarily large resolution coefficient. Surprisingly enough, the one-dimensional analogues of these three invariants allow us to revisit the three well known graph parameters: the circular chromatic number, the chromatic number, and the bandwidth. We also examine the connection between bounded resolution coefficient and minor-closed graph classes.
\end{abstract}

\section{Introduction}
\begin{sloppypar}Given a simple, undirected, finite graph $G = (V,E)$, a {\em representation} of $G$
in the  $d$-dimensional Euclidean space  $\bR^d$ is a function
$\rho$ assigning to each vertex of $G$ a point in $\bR^d$. An $\bR^2$-representation is called {\em planar}, an $\bR^3$-representation
is called {\em spatial.} A  representation of a graph  in $\bR^d$ can be viewed
as a drawing of the graph  with straight edges. The length of an edge
$uv \in E$ is the Euclidean distance between $\rho(u)$ and $\rho(v)$, i.e.,
$\|\rho(u)- \rho(v)\|_2$.
A representation  $\rho$  is {\em non-vertex-degenerate} (NVD) if the function $\rho$ is one-to-one. A representation  $\rho$  is {\em non-edge-degenerate} (NED) if
for all edges $uv\in E(G)$, it holds that $\rho(u)\neq\rho(v)$;
in other words, if the minimal edge length is positive.
Clearly, a non-vertex-degenerate representation is also non-edge-degenerate.
\end{sloppypar}

In this paper we study three graph invariants that are relevant for
representations of graphs. We restrict ourselves to planar representations.
To avoid trivialities, we only consider graphs with at least one edge.

Following~\cite{Pisanski.Zitnik.2008}, we define the \emph{dilation coefficient}
of a non-edge-degenerate representation of a graph as the ratio between the longest
and the shortest edge length in the representation. The minimum of the set of dilation coefficients of all non-edge-degenerate representations of a graph $G$ is called the \emph{dilation coefficient} of $G$, and is denoted by $\dc(G)$.

The \emph{plane-width} of a graph $G=(V,E)$, introduced
in~\cite{Kaminski.et.al.2009} and denoted by $\pw(G)$,
is the minimum diameter of the image of the graph's vertex set,
over all representations $\rho$ with the property that
$d (\rho(u), \rho(v)) \geq 1$ for each edge $uv\in E$, where $d$ is
the Euclidean distance. Equivalently, the plane-width of a graph can be
defined as the minimum, over all non-edge-degenerate representations,
of the ratio between the largest distance between two points and
the shortest length of an edge.

We define the \emph{resolution coefficient} of a non-vertex-degenerate representation
of a graph as the ratio between the longest length of an edge and
the smallest distance between the images of any two distinct vertices.
The minimum of the set of resolution coefficients of  all representations
of a graph $G$ is called the \emph{resolution coefficient} of $G$,
and is denoted by $\re(G)$.

In formulae, denoting by $V_2(G)$ the set of all 2-element subsets of $V(G)$, we have
\begin{equation}                                               \label{eq:dc}
\dc(G)=\min\left\{\frac{\max_{uv\in
E(G)}d(\rho(u),\rho(v))} {\min_{uv\in E(G)}d(\rho(u),\rho(v))}\,:\, \rho \textrm{ is an NED representation of }G\right\} \,,
\end{equation}

\begin{equation}                                               \label{eq:pw}
\pw(G)=\min\left\{\frac{\max_{uv\in
V_2(G)}d(\rho(u),\rho(v))} {\min_{uv\in E(G)}d(\rho(u),\rho(v))}\,:\, \rho \textrm{ is an NED representation of }G \right\} \,,
\end{equation}

\begin{equation}                                              \label{eq:res}
\re(G)=\min\left\{\frac{\max_{uv\in
E(G)}d(\rho(u),\rho(v))} {\min_{uv\in V_2(G)}d(\rho(u),\rho(v))}\,:\, \rho \textrm{ is an NVD representation of }G \right\} \,.
\end{equation}

Note that the above parameters are well-defined, as we only consider graphs with
at least one edge, and each of the three parameters can be expressed as the
minimum value of a continuous real-valued function over a compact subset of $\bR^{2|V(G)|}$.

Notice also that considering the smallest ratio between the largest and
the smallest distance between any two distinct points in a non-vertex-degenerate representation does not lead to a meaningful graph parameter -- indeed, this quantity depends only on the number of vertices of the graph and not on the graph itself.
The problem of determining the minimum value of this parameter
for a given number of points $n$ has previously appeared in the literature
in different contexts such as finding the minimum diameter of a~set of
$n$ points in the plane such that each pair of points is at distance at least
one \cite{Bezdek.Fodor.1999}, or packing non-overlapping unit discs in the plane
so as to minimize the maximum distance between any two disc centers
\cite{Schurmann.2002}.
For a given $n$, we denote the value of this parameter by $h(n)$:
\begin{equation*}                                             
h(n)=\min\left\{\frac{\max_{uv\in V_2(K_n)} d(\rho(u),\rho(v))}
                     {\min_{uv\in V_2(K_n)}d(\rho(u),\rho(v))}\,:\,
                     \rho \textrm{ is an NVD representation of }K_n \right\} \,.
\end{equation*}

\begin{table}[h]
\begin{center}
\begin{tabular}{|c||c|c|c|c|c|c|c|}
\hline
$n$ &2 & 3 & 4 & 5 & 6 & 7 & 8 \\
\hline
$h(n)$ & 1 & 1 & $\sqrt{2}$ & $\frac{1 + \sqrt{5}}{2}$ & $2\sin 72^\circ$ & 2 & $(2 \sin(\pi/14))^{-1}$\\
\hline
approx. & 1 & 1 & 1.414 & 1.618 & 1.902 & 2 & 2.246 \\
\hline
\end{tabular}
\caption{Known values of $h(n)=\dc(K_n)=\pw(K_n)=\re(K_n)$.}
\label{table:valuesKn}
\end{center}
\end{table}

The exact values for $h(n)$ are known only up to $n=8$
\cite{Bateman.Erdos.1951, Bezdek.Fodor.1999}, see Table \ref{table:valuesKn}.
It follows directly from the definitions that if $G$ is a complete graph, the
three parameters  $\dc$, $\pw$ and $\re$ coincide.

\begin{proposition}\label{prop:0}
For every complete graph $K_n$,
$$\dc(K_n)=\pw(K_n)=\re(K_n)=h(n).$$
\qed
\end{proposition}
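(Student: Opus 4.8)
The plan is to reduce all four minimization problems to literally the same optimization problem. The key structural fact about the complete graph is that every pair of distinct vertices forms an edge, so $E(K_n)=V_2(K_n)$. Consequently, in each of the defining formulas~\eqref{eq:dc}, \eqref{eq:pw}, \eqref{eq:res} and in the formula defining $h(n)$, the numerator equals $\max_{uv\in V_2(K_n)}d(\rho(u),\rho(v))$ and the denominator equals $\min_{uv\in V_2(K_n)}d(\rho(u),\rho(v))$. In other words, all four objective functions of $\rho$ are identical; the only potential difference lies in the feasible set over which one minimizes.

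It remains to check that the feasible sets coincide as well. Here I would observe that for $K_n$ the NED and NVD conditions are equivalent: a representation $\rho$ is non-edge-degenerate if and only if every edge has positive length, which---since every pair of distinct vertices is an edge---holds if and only if every pairwise distance $d(\rho(u),\rho(v))$ with $u\neq v$ is positive, i.e.\ $\rho$ is one-to-one, i.e.\ $\rho$ is non-vertex-degenerate. Thus the minimizations defining $\dc$ and $\pw$ (over NED representations) and those defining $\re$ and $h(n)$ (over NVD representations) all range over exactly the same set of representations. Since both the objective function and the feasible set agree across all four problems, their minima coincide, yielding $\dc(K_n)=\pw(K_n)=\re(K_n)=h(n)$.

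There is essentially no analytical obstacle in this argument; its entire content is the two elementary observations above, namely $E(K_n)=V_2(K_n)$ and the coincidence of the NED and NVD conditions for complete graphs. The only point worth recording explicitly is that this common minimum is attained and well-defined, but that is already guaranteed by the remark following the definitions, since each parameter is expressed as the minimum of a continuous real-valued function over a compact subset of $\bR^{2|V(K_n)|}$.
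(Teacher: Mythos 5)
Your proof is correct and follows exactly the route the paper intends: the paper simply asserts that the claim ``follows directly from the definitions,'' and your two observations---that $E(K_n)=V_2(K_n)$ makes all four objective functions identical, and that the NED and NVD conditions coincide for complete graphs so the feasible sets agree---are precisely the details being left implicit. Spelling out the equivalence of the feasible sets, which is the one point a careless reader might overlook, is a worthwhile addition.
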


The parameter $h(n)$ was recently studied by Horvat, Pisanski
and \v Zitnik~\cite{Horvat.Pisanski.Zitnik.2009}.

In this paper we give some relationships between the three parameters.
After giving some basic properties and examples in Section~\ref{sec:basic}, we show in Section~\ref{sec:more} that the dilation coefficient and the plane-width are equivalent graph parameters in the sense that they are bounded on the same sets of graphs.
These two parameters are also equivalent to the chromatic
number, and are therefore bounded from above by a function of the maximum degree.
On the other hand, there exist cubic graphs with arbitrarily large resolution coefficient.
In particular, while the plane-width and the dilation coefficient are bounded from above by a function of the resolution coefficient, the converse is not true.

\begin{sloppypar}
In Section~\ref{sec:1dim} we examine the natural one-dimensional analogues of these three parameters (denoted by $\dc_1(G)$, $\pw_1(G)$ and $\re_1(G)$), and show that they coincide or almost coincide with three well studied graph parameters: the circular chromatic number $\chi_c(G)$, the chromatic number $\chi(G)$, and the bandwidth $\bw(G)$. As a corollary of some relations among
$\dc_1(G)$, $\pw_1(G)$ and $\re_1(G)$, we obtain independent proofs of some of the well-known relations between $\chi_c(G)$, $\chi(G)$ and $\bw(G)$.
\end{sloppypar}

In Section~\ref{sec:minor} we show that $G$ is planar whenever $\re(G)<\sqrt{2}$, while there exist graphs with $\re(G)= \sqrt{2}$ that contain arbitrarily large clique minors.

\section{Preliminaries}

In this section we review some definitions and basic results.
For terms left undefined, we refer the reader to~\cite{Diestel}.
As usual, let $\chi(G)$ denote the \emph{ chromatic number}, i.e.,
the least number of colors needed for a proper vertex coloring of $G$, and
$\omega(G)$ the \emph{ clique number}, i.e., the maximum size of a
complete subgraph of $G$. Clearly, $\omega(G) \le \chi(G)$ for any graph $G$.
Graphs $G$ such that for all induced subgraphs $H$ of $G$ the equality
$\omega(H) = \chi(H)$ holds are called \emph{perfect}.
Recently perfect graphs have been characterized by Chudnovsky, Robertson, Seymour
and Thomas~\cite{Chudnovsky.et.al.2006}, who proved
the famous Strong Perfect Graph Theorem conjectured by Berge in 1961~\cite{Berge}.

Some useful connections between the plane-width and the
chromatic number of a graph, for small values of these two
parameters, are given in~\cite{Kaminski.et.al.2009}:

\begin{thm}\label{thm:pw-chi}
For all graphs $G$,
\begin{enumerate}
\item[(a)] $\pw(G) = 1$ if and only if $\chi(G) \leq 3$\,,
\item[(b)] $\pw(G) \in (2 / \sqrt 3\,, \sqrt 2]$ if and only if $\chi(G) = 4$\,,
\item[(c)] $\pw(K_4) = \sqrt{2}$.
\qed
\end{enumerate}

\end{thm}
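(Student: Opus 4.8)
The plan is to treat the three statements by combining two complementary ideas: explicit ``few-point'' representations, which furnish upper bounds on $\pw(G)$ from a proper colouring of $G$, and partition (Borsuk-type) arguments, which turn a representation of small plane-width into a proper colouring with few colours. Throughout I would normalise representations so that the shortest edge has length $1$; since the diameter of the image is at least the length of any single edge, every graph with an edge satisfies $\pw(G)\ge 1$, and this baseline is used repeatedly.

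For the colouring-to-representation direction I would place entire colour classes at single points. If $\chi(G)\le 3$, map the three classes to the three vertices of a unit equilateral triangle; the map is non-edge-degenerate because a proper colouring sends each edge to a pair of distinct triangle vertices, every edge then has length exactly $1$, and the diameter of the image is also $1$, so $\pw(G)\le 1$ and hence $\pw(G)=1$ by the baseline. This proves the ``if'' part of (a). For (b) I would do the same with a \emph{unit square}: a proper $4$-colouring sends the classes to the four corners, every edge has length $1$ or $\sqrt2$ while the diameter is $\sqrt2$, giving $\pw(G)\le\sqrt2$ whenever $\chi(G)\le 4$. Statement (c) I would simply read off Proposition~\ref{prop:0} together with the value $h(4)=\sqrt2$ recorded in Table~\ref{table:valuesKn} (the square is optimal for four points), which also shows that the right endpoint $\sqrt2$ in (b) is attained, by $K_4$.

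The reverse implications are where the geometry enters. Given a representation of $G$ (shortest edge $1$) whose image $S$ has diameter $D$, any edge of $G$ is a pair of points of $S$ at distance $\ge 1$, so it suffices to properly colour the ``far graph'' $H$ on $S$ whose edges are the pairs at distance $\ge 1$. To use few colours I would partition $S$ into parts of diameter $<1$, since a part of diameter $<1$ contains no far pair and is therefore an independent set of $H$. Borsuk's theorem in the plane splits a set of diameter $D$ into three parts of diameter at most $\tfrac{\sqrt3}{2}D$; for $\pw(G)=1$ we have $D=1$, the parts have diameter $\tfrac{\sqrt3}{2}<1$, the edges (at distance exactly $1$) all run between parts, and we obtain $\chi(G)\le 3$, completing (a). For the lower threshold in (b) the same three-part partition with $D\le 2/\sqrt3$ gives parts of diameter at most $1$, and for the upper threshold the partition of the bounding square of side $\le\sqrt2$ into four congruent sub-squares of diagonal $1$ gives four parts of diameter at most $1$; in the strict ranges $D<2/\sqrt3$ and $D<\sqrt2$ these bounds are $<1$, yielding $\chi(G)\le 3$ and $\chi(G)\le 4$ respectively. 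Combined with the ``if'' parts, and using (a) in the form $\pw(G)>1\Rightarrow\chi(G)\ge 4$, this pins $\chi(G)=4$ to the interval $(2/\sqrt3,\sqrt2]$.

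The main obstacle is exactly the two endpoints, where the partition pieces reach diameter precisely $1$ and an edge of length exactly $1$ could, a priori, lie inside a single piece. Here I would exploit that $S$ is finite and invoke the extremal values from Table~\ref{table:valuesKn}: four points with pairwise distances $\ge 1$ have diameter at least $h(4)=\sqrt2$, so for $D<\sqrt2$ the far graph $H$ has no $K_4$ (clique number at most $3$), while $h(3)=1$ controls triangles. Since the whole set lies, by Jung's theorem, in a disc of radius $D/\sqrt3$, the far pairs inside such a small disc have a very restricted circular structure, and one argues that $H$ is in fact $3$-colourable for $D\le 2/\sqrt3$ and $4$-colourable for $D\le\sqrt2$, rather than merely having the corresponding clique number. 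Establishing this $\chi=\omega$ behaviour of $H$ at the critical diameters---equivalently, ruling out that $\pw(G)$ takes any value in $(1,2/\sqrt3]$ while allowing the value $\sqrt2$---is the delicate point of the whole argument; the cleanest route is a generic perturbation of the partition (avoiding the finitely many pairs that realise a piece's diameter) together with the clique bounds above.
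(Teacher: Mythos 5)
Your overall strategy is sound, and it is essentially the strategy used for this result in the cited source~\cite{Kaminski.et.al.2009} (note that the present paper only quotes Theorem~\ref{thm:pw-chi} with a \textsc{qed} box; it contains no proof of it, so the comparison is with that reference): proper colourings yield representations by placing whole colour classes at the vertices of a unit equilateral triangle or a unit square, and conversely a P\'al--Borsuk partition of the image turns a compact representation into a proper colouring. Parts (a) and (c) of your argument are complete: for (a), when the image has diameter $1$ the three pieces have diameter $\sqrt{3}/2<1$, so there is no boundary problem at all, and (c) is legitimately read off Proposition~\ref{prop:0} together with $h(4)=\sqrt{2}$ from Table~\ref{table:valuesKn}, which is exactly how the paper organizes these facts.

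The genuine gap is in part (b), and you have located it yourself but not closed it. Both converse implications require the \emph{closed} endpoints: ``$\pw(G)\le 2/\sqrt{3}$ implies $\chi(G)\le 3$'' and ``$\pw(G)\le \sqrt{2}$ implies $\chi(G)\le 4$''; your partition argument establishes only the strict versions, and the endpoint $\sqrt{2}$ is not vacuous, since $K_4$ attains it. Neither of your proposed repairs closes this. The clique bounds are a dead end: knowing that the ``far graph'' $H$ satisfies $\omega(H)\le 3$ (for $D<\sqrt{2}$) or $\omega(H)\le 4$ (at $D=\sqrt{2}$) does not bound $\chi(H)$, because the equality $\chi(H)=\omega(H)$ at the critical diameters is precisely what has to be proved, and such geometrically defined graphs are not perfect in general (the Moser spindle, Example 1 of the paper, has $\omega=3<\chi=4$). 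The perturbation idea can be made rigorous, but only with two facts you do not supply: that the covering body (P\'al's regular hexagon of width $D$, respectively a bounding square of side $D$) exists in \emph{every} orientation, and that each of the finitely many vertex pairs at distance exactly $1$ excludes only finitely many orientations; without these, ``generic perturbation'' is an assertion, not an argument. A cleaner, fully elementary patch runs as follows: in each partition the diameter of a piece is attained only by explicitly known pairs of boundary points --- the two cut-midpoints of each pentagon in the hexagon partition, and the two diagonals of each sub-square in the square partition --- so it suffices to assign shared boundary points to pieces in a rotationally symmetric (``pinwheel'') manner, which guarantees that no piece receives both endpoints of one of its diameter pairs. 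In the square case the centre of the bounding square needs one extra observation: it conflicts only with the four outer corners, and at most two adjacent outer corners can be images of vertices (opposite corners are at distance $2>\sqrt{2}$, exceeding the diameter), so the centre can always be assigned to a sub-square whose outer corner is unoccupied. With this boundary bookkeeping, your argument becomes a complete proof.
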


The following property of the plane-width will also be used in some of our proofs.

\begin{proposition}[\cite{Kaminski.et.al.2009}]      \label{prop:pw-chi}
Let $G$ be a graph such that $\chi(G)=\omega(G)$. Then,
$\pw(G)=h(\chi(G))$.
\qed
\end{proposition}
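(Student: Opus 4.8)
The plan is to sandwich $\pw(G)$ between $h(\omega(G))$ and $h(\chi(G))$ and then collapse the two bounds using the hypothesis $\chi(G)=\omega(G)$. The two inequalities come from essentially opposite features of the definition of plane-width: the numerator (a diameter) is controlled by a proper coloring, while the denominator (a minimum edge length) is controlled by a clique.

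For the upper bound $\pw(G)\le h(\chi(G))$, set $k=\chi(G)$, fix a proper coloring $c\colon V(G)\to\{1,\dots,k\}$, and fix an optimal configuration of $k$ distinct points $p_1,\dots,p_k$ realizing $h(k)$, say with diameter $D$ and minimum pairwise distance $\delta$, so that $D/\delta=h(k)$. Define a representation by $\rho(v)=p_{c(v)}$. Since $c$ is proper, the endpoints of every edge receive distinct colors, hence distinct points, so $\rho$ is non-edge-degenerate and every edge has length at least $\delta$; meanwhile every pairwise distance among images is at most $D$. Hence the defining ratio of $\rho$ is at most $D/\delta=h(k)$, giving $\pw(G)\le h(\chi(G))$. (Note that $\rho$ may be vertex-degenerate, but this is permitted, as plane-width only requires non-edge-degeneracy.)

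For the lower bound $\pw(G)\ge h(\omega(G))$, I would first record that plane-width is monotone under taking subgraphs: if $H$ is a subgraph of $G$ with at least one edge, then $\pw(H)\le\pw(G)$. Indeed, restricting any non-edge-degenerate representation of $G$ to $V(H)$ yields a non-edge-degenerate representation of $H$ whose numerator, being a maximum over the smaller index set $V_2(H)$, does not increase, and whose denominator, being a minimum over the smaller index set $E(H)$, does not decrease; the ratio therefore does not increase. Applying this to the clique $K_{\omega(G)}\subseteq G$ and invoking $\pw(K_{\omega(G)})=h(\omega(G))$ from Proposition~\ref{prop:0} yields $h(\omega(G))\le\pw(G)$. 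Combining the two bounds with $\chi(G)=\omega(G)=:k$ gives $h(k)\le\pw(G)\le h(k)$, that is, $\pw(G)=h(\chi(G))$.

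The argument uses no fresh optimization over continuous configurations beyond the known identity $\pw(K_n)=h(n)$, so the only genuinely delicate points are bookkeeping: verifying that the coloring-based representation is non-edge-degenerate, which is exactly where properness of $c$ enters, and correctly tracking the directions of the inequalities for the minimum over $E(H)$ and the maximum over $V_2(H)$ when passing to the subgraph. I expect this last monotonicity check to be the main obstacle, since the numerator and denominator move in opposite directions and one must confirm that both movements are favorable.
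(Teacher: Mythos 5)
Your proof is correct. Note that the paper itself offers no proof of this proposition: it is stated with a \qed as a result imported from \cite{Kaminski.et.al.2009}, so there is no in-paper argument to compare against. Your sandwich $h(\omega(G))\le\pw(G)\le h(\chi(G))$ is exactly the natural argument, and it is the one the paper's own machinery suggests: the upper bound is the special case of homomorphism monotonicity of $\pw$ (mentioned in Section~\ref{sec:basic} as a known fact, since a proper $\chi(G)$-coloring is a homomorphism $G\to K_{\chi(G)}$ and $\pw(K_{\chi(G)})=h(\chi(G))$ by Proposition~\ref{prop:0}), while your lower bound is subgraph monotonicity applied to a maximum clique, again a special case of homomorphism monotonicity via the inclusion $K_{\omega(G)}\to G$. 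Your version has the merit of being self-contained, reproving both monotonicity instances directly from the ratio formula~(\ref{eq:pw}); the only hypotheses you rely on beyond that are Proposition~\ref{prop:0} and the standing assumption that $G$ has an edge (so that $\omega(G)\ge 2$ and $K_{\omega(G)}$ has an edge, which your subgraph lemma needs). Both delicate points you flag --- non-edge-degeneracy of the coloring-based representation, and the opposite directions in which the max over $V_2(H)$ and the min over $E(H)$ move under restriction --- are handled correctly.
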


Obviously, the statement of the above proposition holds for perfect graphs.

\section{Basic properties and examples}\label{sec:basic}

The dilation coefficient, the plane-width and the resolution coefficient
of a graph  are related by the following inequalities.

\begin{proposition}\label{prop:1}
For every graph $G$,
$$1\le \dc(G)\le \min\{\pw(G),\re(G)\}\,.$$
\end{proposition}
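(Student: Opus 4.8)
The plan is to prove the two inequalities $1 \le \dc(G)$ and $\dc(G) \le \min\{\pw(G), \re(G)\}$ separately, using the fact that all three parameters are defined as minima of ratios over certain classes of representations.

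First I would establish $1 \le \dc(G)$. This is immediate from the definition~\eqref{eq:dc}: for any NED representation $\rho$, the longest edge length is at least the shortest edge length, so the ratio $\frac{\max_{uv \in E(G)} d(\rho(u),\rho(v))}{\min_{uv \in E(G)} d(\rho(u),\rho(v))}$ is at least $1$. Taking the minimum over all NED representations preserves this bound.

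Next I would show $\dc(G) \le \pw(G)$. The key observation is that both parameters are minimized over the same class of representations (all NED representations), and they share the same denominator, namely $\min_{uv \in E(G)} d(\rho(u),\rho(v))$. The numerators differ only in that $\dc$ maximizes edge lengths over $E(G)$ while $\pw$ maximizes distances over all pairs $V_2(G)$. Since $E(G) \subseteq V_2(G)$, we have $\max_{uv \in E(G)} d(\rho(u),\rho(v)) \le \max_{uv \in V_2(G)} d(\rho(u),\rho(v))$ for every representation $\rho$. Hence the ratio defining $\dc$ is at most the ratio defining $\pw$ pointwise, and taking minima over the common domain of NED representations gives $\dc(G) \le \pw(G)$.

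Finally I would show $\dc(G) \le \re(G)$. Here the domains differ: $\re$ is minimized over NVD representations, whereas $\dc$ ranges over the larger class of NED representations. Since every NVD representation is also NED (as noted in the introduction), the minimum defining $\dc$ is taken over a superset of the representations used for $\re$, so it suffices to compare the two ratios on an arbitrary NVD representation $\rho$ and then restrict. For such $\rho$, both ratios have the same numerator $\max_{uv \in E(G)} d(\rho(u),\rho(v))$; the denominators are $\min_{uv \in E(G)} d(\rho(u),\rho(v))$ for $\dc$ and $\min_{uv \in V_2(G)} d(\rho(u),\rho(v))$ for $\re$. Since $E(G) \subseteq V_2(G)$, minimizing over the larger index set can only decrease the denominator, so $\min_{uv \in V_2(G)} d(\rho(u),\rho(v)) \le \min_{uv \in E(G)} d(\rho(u),\rho(v))$, whence the $\re$-ratio dominates the $\dc$-ratio for this $\rho$. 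Taking the minimum of the $\dc$-ratio over all NED representations (a superset of the NVD ones) keeps it below the minimum of the $\re$-ratio over NVD representations. The one point requiring a little care is this domain mismatch between NED and NVD representations, so I would state the inclusion of representation classes explicitly to make the comparison of minima valid.
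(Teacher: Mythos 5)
Your proposal is correct and follows essentially the same route as the paper's proof: the trivial lower bound, the pointwise comparison of numerators over the common NED domain for $\dc(G)\le\pw(G)$, and the comparison of denominators on NVD representations together with the inclusion of NVD representations among NED ones for $\dc(G)\le\re(G)$. The paper phrases the last step by taking the optimal NVD representation for $\re(G)$ and noting it is also NED, which is exactly the domain-inclusion point you flag; no substantive difference.
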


\begin{sloppypar}\begin{proof}
The inequality $\dc(G)\ge 1$ is clear.

Let $\rho$ be a non-edge-degenerate representation of $G$. Then, since $E(G)\subseteq V_2(G)$, $$\frac{\max_{uv\in E(G)}d(\rho(u),\rho(v))}{\min_{uv\in E(G)}d(\rho(u),\rho(v))}\le \frac{\max_{uv\in V_2(G)}d(\rho(u),\rho(v))}{\min_{uv\in E(G)}d(\rho(u),\rho(v))}\,.$$
Taking the minimum over all such representations, it follows that $\dc(G)\le
\pw(G)$.

To see that $\dc(G)\le \re(G)$, let $\rho$ be a non-vertex-degenerate representation of $G$ that achieves the minimum in the definition of $\re(G)$. Then, since $\rho$ is
non-edge-degenerate and $E(G)\subseteq V_2(G)$, we have
$$\dc(G)\le \frac{\max_{uv\in E(G)}d(\rho(u),\rho(v))}{\min_{uv\in E(G)}d(\rho(u),\rho(v))}\le \frac{\max_{uv\in E(G)}d(\rho(u),\rho(v))}
{\min_{uv\in V_2(G)}d(\rho(u),\rho(v))}= \re(G)\,.$$
\end{proof}
\end{sloppypar}

\begin{sloppypar}
Clearly, the inequalities from Proposition~\ref{prop:1} are tight.
For instance, $\dc(G)=\pw(G)=\re(G)=1$ for every graph with
at most three vertices and at least one edge.
Moreover, the dilation coefficient of a graph $G$ is equal to~1
if and only if $G$ admits a representation with all edges of the same length.
Such graphs are known under the name of {\em unit distance graphs}.
There is a strong chemical motivation for exploring unit distance graphs
and related concepts, since relevant chemical graphs tend to have
all bond lengths of almost the same size.
By Theorem~\ref{thm:pw-chi}, graphs of plane-width 1 are precisely the 3-colorable
graphs.
To the best of our knowledge, graphs of unit resolution coefficient have not previously
appeared in the literature.
\end{sloppypar}

The following two examples, based on unit distance graphs, show that
$\dc(G)$ can be strictly smaller than $\min\{\pw(G),\re(G)\}$,
and that $\re(G)$ and $\pw(G)$ are incomparable.

\begin{example}
Let $G$ be the Moser spindle, see Fig.~\ref{fig:MS}. This is a 4-chromatic
unit distance graph on 7 vertices. Since $G$ is unit distance, we have
$\dc(G)=1$. On the other hand, since $G$ is 4-chromatic, $\pw(G)> 2/\sqrt{3}$ by Theorem~\ref{thm:pw-chi}. Furthermore, it is easy to see that $G$ cannot
be drawn in the plane with all edges of unit length and all the other pairs of
points at least unit distance apart. Therefore, $\re(G)>1$.
\end{example}

\begin{figure}[ht]
    \centering \includegraphics[width=30mm]{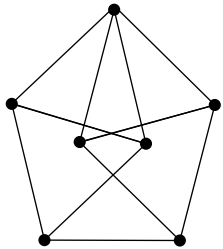}
\caption{A unit-distance representation of the Moser spindle}
\label{fig:MS}
\end{figure}

\begin{example}
Let $G$ be the graph depicted on Fig.~\ref{fig:G19}.
\begin{figure}[ht]
    \centering \includegraphics[width=40mm]{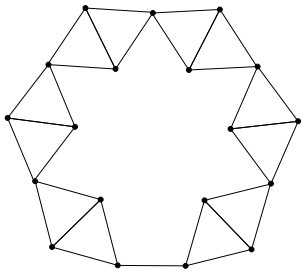}
\caption{A graph with $\re(G)<\pw(G)$ (all edge lengths are the same)}
\label{fig:G19}
\end{figure}

\begin{sloppypar}
Observe that $G$ is 4-chromatic, thus $\pw(G)> 2/\sqrt{3}$
by Theorem~\ref{thm:pw-chi}. The drawing from Fig.~\ref{fig:G19}
gives a representation $\rho$ of $G$ such that all edges are of unit length,
and all the other pairs of points are at least  unit distance apart.
Therefore,
$\max_{uv\in E(G)}d(\rho(u),\rho(v))=\min_{uv\in V_2(G)}d(\rho(u),\rho(v))$,
which implies that $\re(G)=1$.
This shows that in general, 
$\pw(G)$ is not bounded from above by $\re(G)$.
\end{sloppypar}

To see that also $\re(G)$ is not bounded from above by $\pw(G)$,
let $G$ be the 4-wheel, that is, the graph obtained from a 4-cycle by adding to it
a dominating vertex. Then, since $G$ is 3-colorable, $\pw(G)=1$ by
Theorem~\ref{thm:pw-chi}. However, it is easy to see that $G$ cannot be drawn
in the plane with all edges of the same length, which implies that $\re(G)> 1$.
\end{example}

In Section~\ref{sec:more} we will determine which of these three parameters is bounded from above by a function of another one.

A {\it homomorphism} of a graph $G$ to a graph $H$ is an adjacency-preserving mapping, that is a mapping $\phi:V(G)\to V(H)$ such that $\phi(u)\phi(v)\in E(H)$ whenever $uv\in E(G)$. We say that a graph $G$ is {\it homomorphic} to a graph $H$ if there exists a homomorphism of $G$ to $H$. A graph invariant $f$ is {\it homomorphism monotone} if $f(G)\le f(H)$ whenever $G$ is homomorphic to $H$. In~\cite{Kaminski.et.al.2009}, it was shown that the plane-width is homomorphism monotone.
We now show that the same property holds for the dilation coefficient, and that the resolution coefficient is monotone with respect to the subgraph relation.

\begin{proposition}\label{prop:subg} Let $G$ and $H$ be graphs with at least one edge.
\begin{itemize}
  \item[$(i)$] If $G$ is homomorphic to $H$ then $\dc(G)\le \dc(H)$.
  \item[$(ii)$] If $G$ is a subgraph of $H$ then $\re(G)\le \re(H)$.
\end{itemize}\end{proposition}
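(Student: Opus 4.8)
The plan is to prove the two parts separately, each by transporting an optimal representation from the larger graph back to the smaller one along the given map.

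For part $(i)$, suppose $\phi : V(G) \to V(H)$ is a homomorphism, and let $\sigma$ be an NED representation of $H$ that achieves the minimum in the definition of $\dc(H)$. I would define a representation $\rho$ of $G$ by pulling back, namely $\rho = \sigma \circ \phi$, so that $\rho(v) = \sigma(\phi(v))$ for each $v \in V(G)$. The key observation is that for every edge $uv \in E(G)$ we have $\phi(u)\phi(v) \in E(H)$, since $\phi$ preserves adjacency; in particular $\phi(u) \neq \phi(v)$ because $H$ is simple, so $d(\rho(u),\rho(v)) = d(\sigma(\phi(u)),\sigma(\phi(v))) > 0$ and $\rho$ is non-edge-degenerate. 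The set of edge lengths of $\rho$ is precisely $\{\, d(\sigma(\phi(u)),\sigma(\phi(v))) : uv \in E(G)\,\}$, which is a subset of the set of edge lengths of $\sigma$ (the image edges $\phi(u)\phi(v)$ all lie in $E(H)$). Taking the ratio of the largest to the smallest edge length of $\rho$, the numerator is at most $\max_{xy \in E(H)} d(\sigma(x),\sigma(y))$ and the denominator is at least $\min_{xy \in E(H)} d(\sigma(x),\sigma(y))$, so the dilation coefficient of $\rho$ is at most $\dc(H)$. Since $\dc(G)$ is the minimum over all NED representations of $G$, we conclude $\dc(G) \le \dc(H)$.

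For part $(ii)$, suppose $G$ is a subgraph of $H$, and let $\sigma$ be an NVD representation of $H$ achieving the minimum in the definition of $\re(H)$. Define $\rho$ to be the restriction of $\sigma$ to $V(G)$. Since $\sigma$ is one-to-one and $V(G) \subseteq V(H)$, the restriction $\rho$ is also one-to-one, hence an NVD representation of $G$. Now $E(G) \subseteq E(H)$ gives $\max_{uv \in E(G)} d(\rho(u),\rho(v)) \le \max_{xy \in E(H)} d(\sigma(x),\sigma(y))$, and $V_2(G) \subseteq V_2(H)$ gives $\min_{uv \in V_2(G)} d(\rho(u),\rho(v)) \ge \min_{xy \in V_2(H)} d(\sigma(x),\sigma(y))$. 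Dividing, the resolution coefficient of $\rho$ is at most $\re(H)$, and minimizing over all NVD representations of $G$ yields $\re(G) \le \re(H)$.

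The only genuinely delicate point, and the reason the two parts require different hypotheses, is the behavior of the minimum-distance denominator. For the resolution coefficient the relevant denominator ranges over \emph{all} pairs $V_2$, so a homomorphism will not do: distinct vertices of $G$ can be identified under $\phi$, collapsing the minimum pairwise distance to zero and destroying the NVD property, which is why part $(ii)$ demands the stronger subgraph hypothesis. Conversely, for the dilation coefficient both the numerator and denominator range only over \emph{edges}, so identifying non-adjacent vertices causes no harm; the homomorphism only needs to guarantee that images of adjacent vertices stay distinct, which adjacency-preservation already provides. Thus the main obstacle is really conceptual rather than computational — matching the right notion of map (homomorphism versus subgraph inclusion) to the right invariant — and once the correct pullback/restriction is chosen, the inequalities on numerator and denominator follow from the elementary set inclusions $E(G) \subseteq E(H)$ and $V_2(G) \subseteq V_2(H)$.
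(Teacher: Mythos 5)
Your proposal is correct and follows essentially the same route as the paper: part $(ii)$ is exactly the paper's argument (restrict an optimal NVD representation of $H$ to $V(G)$ and use $E(G)\subseteq E(H)$, $V_2(G)\subseteq V_2(H)$), and your explicit pullback $\rho = \sigma\circ\phi$ in part $(i)$ is just the concrete unpacking of the paper's one-line proof, which reformulates $\dc(G)$ as the least $p$ for which $G$ is homomorphic to a graph on points of $\mathbb{R}^2$ with edge lengths in $[1,p]$ and then invokes transitivity of the homomorphism relation. Your closing remark about why $(ii)$ needs the subgraph hypothesis (identification of non-adjacent vertices collapses the $V_2$ minimum) is also accurate and consistent with the paper's later use of $K_{1,n}$.
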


\begin{proof}
It follows from definitions that $\dc(G)$ is the minimum value of $p$
such that $G$ is homomorphic to some graph $H$ whose vertex set is a subset
of $\mathbb{R}^2$ and such  that every edge of $H$ connects two points
at Euclidean distance at least 1 and at most $p$. This observation together
with the transitivity of the homomorphism relation immediately implies $(i)$.

To see~$(ii)$, let $\rho$ be a non-vertex-degenerate representation of $H$ that
achieves the minimum in the definition of $\re(H)$. Then, since the restriction
of $\rho$ to $V(G)$ is a non-vertex-degenerate representation of $G$, $E(G)\subseteq E(H)$ and
$V_2(G)\subseteq V_2(H)$, we have
$$
\re(G)\le \frac{\max_{uv\in E(G)}d(\rho(u),\rho(v))}
               {\min_{uv\in V_2(G)}d(\rho(u),\rho(v))}\le
          \frac{\max_{uv\in E(H)}d(\rho(u),\rho(v))}
               {\min_{uv\in V_2(H)}d(\rho(u),\rho(v))}= \re(H)\,.
$$
\end{proof}

\section{More on relationship between $\dc$, $\pw$, and $\re$}\label{sec:more}
In this section, we examine more closely the relations between these three parameters.
First, we show in Theorem~\ref{thm:pwdelta} that the plane-width of a graph is
bounded from above by a function of its dilation coefficient. Our proof will make
use of the following result from~\cite{Kaminski.et.al.2009}
(combining Lemmas 2.2 and 3.7 therein).

\begin{lem}(\cite{Kaminski.et.al.2009})\label{lem:upperbound}
There exists a constant $C>0$ such that for every graph $G$,
$$\pw(G) \leq \sqrt{\frac{2\sqrt{3}}{\pi}\chi(G)} +C \,.$$
\end{lem}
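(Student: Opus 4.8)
The plan is to reduce the statement to a packing estimate for points in the plane and then invoke the homomorphism monotonicity of the plane-width. Writing $k=\chi(G)$, a proper $k$-coloring of $G$ is exactly a homomorphism $G\to K_k$, so by the homomorphism monotonicity of $\pw$ recorded above and by Proposition~\ref{prop:0} we obtain $\pw(G)\le \pw(K_k)=h(k)$. Hence it suffices to prove the numerical bound $h(k)\le \sqrt{\tfrac{2\sqrt3}{\pi}\,k}+C$ for a constant $C$ independent of $k$; in this way the whole problem becomes independent of the graph $G$ and is reduced to estimating $h(k)$.

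Next I would bound $h(k)$ from above by exhibiting an explicit configuration of $k$ points. Since $h(k)$ is a scale-invariant ratio of the largest to the smallest pairwise distance, it is enough to find $k$ points with pairwise distances at least $1$ whose diameter is at most $\sqrt{\tfrac{2\sqrt3}{\pi}\,k}+C$. For this I would use the triangular (hexagonal) lattice $\Lambda\subseteq\bR^2$ normalised so that nearest neighbours are at distance exactly $1$; its point density is $2/\sqrt3$ per unit area, which is precisely the optimal density for packings with minimum pairwise distance $1$ and is the origin of the constant $2\sqrt3/\pi$. The candidate configuration is any $k$ lattice points contained in a disk $D_R$ of a suitable radius $R$: their minimum pairwise distance is $1$ and their diameter is at most $2R$.

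The crux is therefore to choose $R$ as small as possible subject to $D_R$ containing at least $k$ points of $\Lambda$, and to show that this forces $2R\le \sqrt{\tfrac{2\sqrt3}{\pi}\,k}+C$. I would control the count $N(R)=|\Lambda\cap D_R|$ by a Voronoi-cell argument: each lattice point carries a Voronoi hexagon of area $\sqrt3/2$ and bounded circumradius $r_0$, and the hexagons of the points lying in $D_R$ cover the smaller disk $D_{R-r_0}$, whence $N(R)\cdot\tfrac{\sqrt3}{2}\ge \pi(R-r_0)^2$ and so $N(R)\ge \tfrac{2\pi}{\sqrt3}R^2-c_1R$ for an explicit constant $c_1$. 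Solving $\tfrac{2\pi}{\sqrt3}R^2-c_1R\ge k$ yields $R\le \sqrt{\tfrac{\sqrt3}{2\pi}\,k}+O(1)$, and doubling gives the claimed bound on the diameter. The main obstacle is exactly this boundary correction: one must convert the asymptotic density $2/\sqrt3$ into a bound whose error is merely additive (the constant $C$) after taking a square root, which is why having the error term in $N(R)$ grow only linearly in $R$, rather than faster, is essential. Everything else—the homomorphism reduction and the passage from the ratio $h(k)$ to the diameter of a unit-separated set—is routine.
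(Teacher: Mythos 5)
Your argument is correct. One point of order first: this paper does not actually prove Lemma~\ref{lem:upperbound} --- it imports it verbatim from~\cite{Kaminski.et.al.2009} (combining Lemmas 2.2 and 3.7 of that reference), so there is no in-paper proof to compare against; your proposal is a self-contained reconstruction, and it follows the same two-step scheme that the citation indicates: a graph-theoretic reduction (their Lemma 2.2, which is exactly the homomorphism monotonicity of $\pw$ recorded in Section~\ref{sec:basic}, applied to the coloring homomorphism $G\to K_{\chi(G)}$, giving $\pw(G)\le \pw(K_{\chi(G)})=h(\chi(G))$ via Proposition~\ref{prop:0}), followed by a packing estimate for $h(k)$ (their Lemma 3.7). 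Your quantitative part checks out: for the unit triangular lattice the Voronoi cell is a hexagon of area $\sqrt3/2$ and circumradius $r_0=1/\sqrt3$, the cells of lattice points inside $D_R$ cover $D_{R-r_0}$, hence $N(R)\ge \tfrac{2\pi}{\sqrt3}(R-r_0)^2\ge \tfrac{2\pi}{\sqrt3}R^2-c_1R$, and solving this quadratic for the least admissible $R$ gives $2R\le \sqrt{\tfrac{2\sqrt3}{\pi}k}+C$ with $C$ absolute; since any set of lattice points has pairwise distances at least $1$, this bounds $h(k)$ as required. You also correctly identify the one delicate point, namely that the boundary error in $N(R)$ must be $O(R)$ so that after extracting the square root it survives only as an additive constant rather than a multiplicative loss. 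A minor remark: the appeal to homomorphism monotonicity can be bypassed entirely by sending each of the $\chi(G)$ color classes to a distinct lattice point of the configuration, which directly yields an NED representation witnessing the same bound on $\pw(G)$.
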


\begin{thm}               \label{thm:pwdelta}
There exist positive constants $K, C>0$ such that for every graph $G$,
$$\pw(G)\le K\cdot\dc(G)+C\,.$$
\end{thm}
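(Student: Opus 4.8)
The plan is to bound $\pw(G)$ by a function of $\chi(G)$ using Lemma~\ref{lem:upperbound}, and then to bound $\chi(G)$ by a function of $\dc(G)$. The first step is already in hand, so the entire problem reduces to showing that the chromatic number is controlled by the dilation coefficient.

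To bound $\chi(G)$ in terms of $\dc(G)$, I would exploit the reformulation of $\dc$ given in the proof of Proposition~\ref{prop:subg}$(i)$: $\dc(G)$ is the least $p$ such that $G$ admits a homomorphism into a graph whose vertices are points of $\bR^2$, with every edge realized by a Euclidean distance in $[1,p]$. So fix a representation $\rho$ achieving $\dc(G)=p$, in which every edge has length in $[1,p]$. The idea is to color $V(G)$ according to which cell of a fixed grid (or a fixed tiling of the plane into small regions) the point $\rho(v)$ falls into. If I partition the plane into regions each of diameter strictly less than $1$, then any two vertices mapped to the same region are at distance less than $1$, hence \emph{cannot} be adjacent (every edge has length at least $1$). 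Thus this partition induces a proper coloring, and $\chi(G)$ is at most the number of regions that can contain a point of $\rho(V)$.

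\emph{First} I would observe that all of $\rho(V)$ lies within a bounded region once we pass to a connected graph (or argue componentwise, since $\chi(G)$ and $\dc(G)$ behave well under taking components): since every edge has length at most $p$, the image of any connected graph on $n$ vertices has diameter at most $(n-1)p$, but this bound grows with $n$, which is useless. The correct move is \emph{not} to bound the number of occupied grid cells globally, but to use a periodic coloring. I would tile the plane by a fixed shape (for instance regular hexagons, as used in the disc-packing style arguments underlying Lemma~\ref{lem:upperbound}) of diameter just under $1$, and then color the tiles themselves with finitely many colors so that any two tiles within distance $p$ of each other receive different colors. Coloring $v$ by the color of the tile containing $\rho(v)$ then gives a proper coloring: adjacent vertices lie in distinct tiles (distance $\ge 1$) that are within distance $p$ (distance $\le p$), hence differently colored. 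The number of colors needed is the number of tiles meeting a disc of radius $p$ around a fixed tile, which by an area/packing estimate is $O(p^2)$. This yields $\chi(G)\le c_1\,\dc(G)^2+c_2$ for absolute constants.

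Combining with Lemma~\ref{lem:upperbound}, which gives $\pw(G)\le \sqrt{(2\sqrt 3/\pi)\,\chi(G)}+C$, the square root cancels the square in $\chi(G)=O(\dc(G)^2)$, producing a bound of the form $\pw(G)\le K\cdot\dc(G)+C$, exactly as claimed. The main obstacle is the tile-coloring step: I must choose the tile diameter strictly below $1$ so that same-tile implies non-adjacent, yet control the number of color classes in the periodic coloring as a clean $O(p^2)$ so that the final constant $K$ is genuinely independent of $G$; getting the packing constant right (and confirming it only depends on $p=\dc(G)$, not on $|V(G)|$) is where the care is needed.
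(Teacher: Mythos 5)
Your proposal is correct and follows essentially the same route as the paper: the paper also fixes a representation achieving $\dc(G)$ with edge lengths in $[1,\dc(G)]$, uses a periodic coloring of the plane by half-open squares of diameter (just under) $1$ grouped into $t\times t$ blocks with $t=\lceil\sqrt 2\,\dc(G)\rceil+1$ — so that same-colored cells are either identical (hence contain no edge) or at distance exceeding the maximum edge length — concluding $\chi(G)\le t^2=O(\dc(G)^2)$, and then applies Lemma~\ref{lem:upperbound} so that the square root cancels the square. Your remark that the tiling must be colored periodically rather than counting occupied cells globally, and your hexagon variant for a better constant, both match the paper (the latter appears there as a footnote).
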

%

\begin{proof}
\begin{sloppypar}
Let $\rho$ be a representation of $G$ achieving the minimum in the definition
of the dilation coefficient (cf.~Equation~(\ref{eq:dc})).
We may assume, without loss of generality, that
$\min_{uv\in E(G)}d(\rho(u),\rho(v))=1$
(otherwise, we scale the representation accordingly). Hence
$\dc(G)$ equals the maximum length of an edge of $G$ w.r.t.~the representation $\rho$.
\end{sloppypar}

Let us cover the set $\{\rho(v):v\in V(G)\}$ with pairwise disjoint translates of the
half-open square $S=[0, t/\sqrt 2)\times[0, t/\sqrt 2)$, arranged in a grid-like
way, where $t=\lceil\sqrt 2\dc(G)\rceil+1$. Furthermore, we partition each copy
$S'$ of $S$ into $t^2$ pairwise disjoint translates $A_{ij}(S')$ of the set
$[0,1/\sqrt 2)\times[0,1/\sqrt 2)$, for each $i,j\in \{1,\ldots,t\}$. Here, the
pair $(i,j)$ denotes the ``coordinates" of the square $A_{ij}(S')$ within $S'$. We
do the assignment of the coordinate pairs to the small squares in the same way for
all copies $S'$ of $S$.

Let $c:V(G)\to \{1,\ldots,t\}^2$ be a coloring of the vertices of $G$ that assigns
to each $v\in V(G)$ the unique $(i,j)\in \{1,\ldots,t\}^2$ such that there exists
a translate $S'$ of $S$ such that $\rho(v)\in A_{ij}(S')$. By construction, $c$ is a
proper coloring of $G$. Therefore, $\chi(G)\le t^2$. By
Lemma~\ref{lem:upperbound}, $\pw(G) \leq \sqrt{\frac{2\sqrt{3}}{\pi}\chi(G)} +C
\le \sqrt{\frac{2\sqrt{3}}{\pi}}t +C$. Combining this inequality with the
inequality $t \le \sqrt 2\dc(G)+2$, the proposition follows:
we may take $K=\sqrt{\frac{4\sqrt 3}{\pi}}\approx 1.4850$.\footnote{A better constant $K=\sqrt{\frac{8\sqrt 3}{3\pi}}+\epsilon\le 1.2126$
can be obtained by covering the plane with hexagons instead of the squares
(similarly as was done in Lemma 3.6 in~\cite{Kaminski.et.al.2009}).}
\end{proof}

We say that two graph parameters $f$ and $g$ are {\it equivalent} if
they are bounded on precisely the same sets of graphs, that is, if
for every set of graphs ${\cal G}$, we have
$$
\sup\{f(G)~:~G\in {\cal G}\} <\infty
$$
if and only if
$$
\sup\{g(G)~:~G\in {\cal G}\} <\infty\,.
$$
It follows from Proposition~\ref{prop:1} and Theorem~\ref{thm:pwdelta}
that the dilation coefficient and the plane-width are equivalent graph parameters,
which, according to the following result from~\cite{Kaminski.et.al.2009},
are also equivalent to the chromatic number $\chi$:

\begin{thm}[\cite{Kaminski.et.al.2009}]                \label{thm:chi-pw}
For every $\epsilon>0$ there exists an integer $k$ such that for all
graphs $G$ of chromatic number at least $k$,
$$
\left(\frac{\sqrt 3}{2}-\epsilon\right)\sqrt{\chi(G)}
       < \pw(G)< \left(\sqrt{\frac{2\sqrt{3}}{\pi}}+\epsilon\right)\sqrt{\chi(G)}\,.
$$
\end{thm}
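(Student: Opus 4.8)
The plan is to prove the two inequalities separately: the upper bound is an immediate asymptotic reformulation of Lemma~\ref{lem:upperbound}, while the lower bound needs a covering-and-coloring argument. For the upper bound, Lemma~\ref{lem:upperbound} already gives $\pw(G)\le\sqrt{2\sqrt3/\pi}\,\sqrt{\chi(G)}+C$ for an absolute constant $C$. Given $\epsilon>0$, the additive term $C$ is dominated by $\epsilon\sqrt{\chi(G)}$ as soon as $\chi(G)>(C/\epsilon)^2$, so choosing $k$ above this threshold yields $\pw(G)<(\sqrt{2\sqrt3/\pi}+\epsilon)\sqrt{\chi(G)}$ whenever $\chi(G)\ge k$. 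This half requires no new idea.

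For the lower bound I would show that small plane-width forces a proper coloring with few colors, via a geometric partition. Take a representation $\rho$ achieving $\pw(G)$ and rescale so that $\min_{uv\in E(G)}d(\rho(u),\rho(v))=1$; then every edge has length at least $1$ and the point set $\rho(V)$ has diameter $D:=\pw(G)$. The key observation is that any set of points of diameter strictly less than $1$ is an independent set, since no two of its members can be joined by an edge. Hence it suffices to partition $\rho(V)$ into few cells each of diameter $<1$: assigning each cell its own color gives a proper coloring, so $\chi(G)$ is at most the number of nonempty cells.

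To count the cells, I would first invoke a classical theorem of P\'al: every planar set of diameter $D$ is contained in a regular hexagon $H$ whose opposite sides lie at distance $D$, and such a hexagon has area $\tfrac{\sqrt3}{2}D^2$. I then tile $H$ by translates of a small regular hexagon of diameter $1-\delta$ (area $\tfrac{3\sqrt3}{8}(1-\delta)^2$), turning the tiles into half-open cells exactly as in the proof of Theorem~\ref{thm:pwdelta}, so that they form a genuine partition. The number of tiles meeting $H$ is at most $\operatorname{area}(H)$ divided by the area of one tile, plus a boundary term for tiles protruding past $\partial H$; the latter is $O(D)$, being proportional to the perimeter of $H$. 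Dividing, the number of cells is $\frac{\sqrt3/2}{3\sqrt3/8}D^2(1+o(1))=\tfrac{4}{3}D^2(1+o(1))$ as $D\to\infty$ and $\delta\to0$. Therefore $\chi(G)\le\tfrac{4}{3}\pw(G)^2(1+o(1))$, and rearranging gives $\pw(G)>(\tfrac{\sqrt3}{2}-\epsilon)\sqrt{\chi(G)}$ once $\chi(G)$ (equivalently $D$) is large enough, i.e.\ for $\chi(G)\ge k$ after possibly enlarging $k$.

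The main obstacle is controlling the constant in the cell count: I must check that the boundary/collar contribution and the loss from shrinking the tile diameter below $1$ are both of lower order, so that they can be absorbed into the single parameter $\epsilon$ by taking $k$ large. The clean value $\tfrac{\sqrt3}{2}$ hinges on pairing P\'al's covering hexagon (area $\tfrac{\sqrt3}{2}D^2$) with the hexagonal tiling (cell area $\to\tfrac{3\sqrt3}{8}$); a weaker enclosing region or a less efficient tiling would degrade the constant, so the matching of these two hexagonal quantities is the crux of the estimate.
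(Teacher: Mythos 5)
Your proposal is correct. Note first that the paper you were given does not actually prove Theorem~\ref{thm:chi-pw}: it imports the statement from~\cite{Kaminski.et.al.2009} (the plane-width paper), so there is no in-paper proof to compare against --- only Lemma~\ref{lem:upperbound}, which is likewise imported. Your upper-bound half is exactly the right (and essentially only) move: absorb the additive constant $C$ of Lemma~\ref{lem:upperbound} into $\epsilon\sqrt{\chi(G)}$ once $\chi(G)>(C/\epsilon)^2$. Your lower-bound half is also sound, and in fact reconstructs the argument of the cited reference: rescale an optimal representation so edges have length at least $1$, observe that any cell of diameter less than $1$ is an independent set, enclose the point set in a P\'al hexagon of width $D=\pw(G)$ and area $\tfrac{\sqrt3}{2}D^2$, tile by hexagonal cells of area tending to $\tfrac{3\sqrt3}{8}$, and conclude $\chi(G)\le\tfrac43 D^2(1+o(1))$, i.e.\ $\pw(G)>(\tfrac{\sqrt3}{2}-\epsilon)\sqrt{\chi(G)}$ for $\chi(G)$ large; the arithmetic ($\sqrt{3}/2\cdot D^2$ versus $3\sqrt{3}/8$, ratio $4/3$, square root $\sqrt{3}/2$) checks out, the boundary term is genuinely $O(D)$ since it is controlled by the perimeter of the enclosing hexagon, and the implication ``$\chi$ large $\Rightarrow$ $D$ large'' needed to kill the $o(1)$ terms follows from the same inequality. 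Your closing remark correctly identifies the crux: it is precisely the pairing of P\'al's enclosing hexagon with the hexagonal tiling that produces the constant $\tfrac{\sqrt3}{2}$ (a cruder enclosing region, e.g.\ a disk via the isodiametric inequality, or a square tiling, would yield a different constant), and this hexagon-based idea is the same one the authors allude to in the footnote to Theorem~\ref{thm:pwdelta}.
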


On the other hand, it turns out that the resolution coefficient is not equivalent to the dilation coefficient or to the plane-width.

\begin{thm}\label{prop:pwre}
For every positive function $f$ there exists a graph $G$ such that
$\re(G) > f(\pw(G))$.

There exists a function $f$ such that for every graph $G$,
$\pw(G)\le f(\re(G))$.
\end{thm}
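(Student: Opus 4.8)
The plan is to treat the two statements separately, the second being immediate from the results already established and the first requiring an explicit family of graphs. For the second statement I would simply chain the inequalities proved earlier: Proposition~\ref{prop:1} gives $\dc(G)\le\re(G)$, and Theorem~\ref{thm:pwdelta} gives $\pw(G)\le K\cdot\dc(G)+C$ for absolute constants $K,C>0$. Composing these, $\pw(G)\le K\cdot\re(G)+C$, so the function $f(x)=Kx+C$ witnesses the claim.

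For the first statement I would exhibit a family $(G_n)_{n\ge 2}$ with $\pw(G_n)$ bounded but $\re(G_n)\to\infty$; then for any fixed positive $f$ the quantity $f(\pw(G_n))$ stays bounded while $\re(G_n)$ eventually exceeds it. I take $G_n$ to be the subdivision of $K_n$: keep the $n$ vertices $v_1,\dots,v_n$ and replace each edge $v_iv_j$ by a path $v_i\,w_{ij}\,v_j$ through a new vertex $w_{ij}$. This graph is bipartite, with the $v_i$ on one side and the $w_{ij}$ on the other, hence $3$-colorable, so $\pw(G_n)=1$ by Theorem~\ref{thm:pw-chi}.

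The crux is the lower bound $\re(G_n)\ge h(n)/2$. Given any non-vertex-degenerate representation $\rho$ of $G_n$, scale so that the smallest distance between any two vertices equals $1$; the resolution coefficient of $\rho$ then equals its longest edge length. In particular the $n$ points $\rho(v_1),\dots,\rho(v_n)$ are pairwise at distance at least $1$, so by the very definition of $h(n)$ their diameter is at least $h(n)$. Choosing $i,j$ with $d(\rho(v_i),\rho(v_j))\ge h(n)$ and using that $w_{ij}$ is adjacent to both, the triangle inequality gives $d(\rho(v_i),\rho(w_{ij}))+d(\rho(w_{ij}),\rho(v_j))\ge h(n)$, so one of the two edges incident to $w_{ij}$ has length at least $h(n)/2$. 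Hence the longest edge has length at least $h(n)/2$ in every such representation, and taking the minimum over representations yields $\re(G_n)\ge h(n)/2$. A standard disc-packing estimate shows $h(n)\ge\sqrt n-1\to\infty$, completing the construction.

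The main obstacle is identifying the right mechanism to force $\re$ large while keeping the chromatic number, and hence the plane-width, bounded. The key realization is that non-vertex-degeneracy already forces all $n$ original vertices to be pairwise separated, so they must spread out to diameter $h(n)$; the length-two connecting paths then convert this unavoidable spread into a single long edge, while subdivision is exactly what keeps the graph bipartite. If one insists on bounded-degree examples, as in the cubic case mentioned in the introduction, the same argument applies to a cubic expander on $N$ vertices: all $N$ points spread to diameter $\approx\sqrt N$ while any two are joined by a path of length $O(\log N)$, forcing an edge of length $\gtrsim\sqrt N/\log N\to\infty$, with $\pw$ bounded since cubic graphs satisfy $\chi\le 4$.
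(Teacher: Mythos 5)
Your proposal is correct, but both halves take a genuinely different route from the paper. For the second statement you simply compose Proposition~\ref{prop:1} ($\dc(G)\le\re(G)$) with Theorem~\ref{thm:pwdelta} ($\pw(G)\le K\cdot\dc(G)+C$), obtaining the explicit linear function $f(x)=Kx+C$; the paper instead argues through the maximum degree: since $K_{1,\Delta(G)}$ is a subgraph of $G$, Proposition~\ref{prop:subg} together with the star bound $\re(K_{1,n})=\Omega(\sqrt{n})$ shows that $\Delta(G)$, hence $\chi(G)\le\Delta(G)+1$, hence (via the bound of plane-width by a function of chromatic number) $\pw(G)$, is bounded by a function of $\re(G)$. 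Your derivation is shorter and yields a concrete linear $f$; the paper's route buys the structural by-product that bounded resolution coefficient forces bounded maximum degree, which it exploits in the discussion surrounding Theorem~\ref{thm:degreere}. For the first statement the paper uses the stars $K_{1,n}$ themselves (bipartite, so $\pw=1$, while a packing argument around the center of the star gives $\re(K_{1,n})=\Omega(\sqrt{n})$), whereas you use the full subdivision of $K_n$ (also bipartite, so $\pw=1$) and lower-bound $\re$ by $h(n)/2$: non-vertex-degeneracy forces the $n$ branch vertices to spread to diameter at least $h(n)$, and the two-edge paths convert this spread into a single edge of length at least $h(n)/2$. That mechanism is sound, and it is in fact the same ``unavoidable spread plus short paths'' idea the paper uses later, in Theorem~\ref{thm:degreere}, to build cubic graphs of large resolution coefficient. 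One small inaccuracy: the standard disc-packing computation gives $h(n)\ge(\sqrt{n}-1)/2$ rather than $\sqrt{n}-1$, but since only $h(n)\to\infty$ is needed, nothing in your argument breaks.
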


\begin{proof}
Take $G=K_{1,n}$. Since these graphs are bipartite, $\pw(K_{1,n})=1$ for all $n$.
However, the values of $\re(K_{1,n})$ tend to infinity with increasing $n$.
This follows by observing that there exists a constant $C>0$ such that for every $N>0$, in any non-vertex-degenerate representation $\rho$ of $K_{1,n}$ with $\min_{uv\in V_2(G)}d(\rho(u),\rho(v))=1$,
less than $CN^2$ vertices can be mapped to distance at most $N$
from the image of the center of the star. Thus, $\re(K_{1,n}) = \Omega(\sqrt n)$. This shows the first part of the theorem.

Given a graph $G$ with at least one edge,
let $\Delta(G)$ denote its maximum vertex degree.
 Since $K_{1,\Delta(G)}$ is a subgraph of $G$, Proposition~\ref{prop:subg} implies that $\re(G)\ge \re(K_{1,\Delta(G)})= \Omega(\sqrt{\Delta(G)})$. Therefore, the maximum degree of a graph is bounded from above by a function of its resolution coefficient. In particular, since $\chi(G)\le \Delta(G)+1$,
the chromatic number of $G$ is also bounded from above by a function of $\re(G)$, and the second part of the theorem follows by Theorem~\ref{thm:pw-chi}.\end{proof}

In the above proof, large degree caused the resolution coefficient to be large.
The next example shows that large maximum degree is only a sufficient
but not a necessary condition for large resolution coefficient. Therefore, while the chromatic number of a graph is bounded from above by a function of its maximum vertex degree, and then the same is true for the dilation coefficient and the plane-width, this is not the case for the resolution coefficient.

\begin{thm}               \label{thm:degreere}
For each $R > 0$ there exists a graph $G_R$ of maximum degree at most 3
such that $\re(G_R) > R$.
\end{thm}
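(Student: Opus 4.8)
The plan is to exhibit, for each $R>0$, a graph of maximum degree at most $3$ that has many vertices but small \emph{graph-theoretic} diameter, and to argue that this combination forces a long edge in every admissible representation. The intuition is that $n$ pairwise-separated points must spread out over a region of Euclidean diameter $\Omega(\sqrt n)$, while a small graph diameter means any two vertices are joined by few edges; something has to give, and it is the maximum edge length.

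First I would normalize. By scaling, $\re(G)$ equals the minimum, over all non-vertex-degenerate representations $\rho$ with $\min_{uv\in V_2(G)}d(\rho(u),\rho(v))=1$, of $\max_{uv\in E(G)}d(\rho(u),\rho(v))$. Fix any such $\rho$ of a graph $G$ on $n$ vertices. Since the $n$ points $\rho(v)$ are pairwise at distance $\ge 1$, the open discs of radius $1/2$ about them are pairwise disjoint; if $\delta$ is the Euclidean diameter of the point set, all these discs lie in a common disc of radius $\delta+\tfrac12$, so $n\cdot\pi/4\le \pi(\delta+\tfrac12)^2$ and hence $\delta\ge(\sqrt n-1)/2$. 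Thus there are vertices $u,v$ with $d(\rho(u),\rho(v))\ge(\sqrt n-1)/2$.

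Next I relate this spread to the longest edge through the graph metric. Let $D=\diam(G)$ and $L=\max_{uv\in E(G)}d(\rho(u),\rho(v))$. Taking a shortest $u$–$v$ path in $G$ (at most $D$ edges) and applying the triangle inequality along it gives $(\sqrt n-1)/2\le d(\rho(u),\rho(v))\le D\cdot L$. As $\rho$ was arbitrary, this yields the clean bound
\[
\re(G)\ \ge\ \frac{\sqrt n-1}{2\,\diam(G)}.
\]
Finally I would choose the family. Any subcubic family with $n\to\infty$ and $\diam=o(\sqrt n)$ works; the most elementary choice is the complete binary tree $G_R=T_k$ of depth $k$, which has $n=2^{k+1}-1$ vertices, maximum degree $3$, and diameter $2k$. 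For it the displayed bound reads $\re(T_k)\ge(\sqrt{2^{k+1}-1}-1)/(4k)\to\infty$, so taking $k$ large enough gives $\re(G_R)>R$.

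I expect the only delicate points to be fixing the packing constant and confirming that a geodesic uses at most $\diam(G)$ edges; both are routine. The genuinely important phenomenon is that bounded degree forces volume growth from below (a ball of radius $D$ in a cubic graph has fewer than $3\cdot 2^{D}$ vertices, so $D=\Omega(\log n)$) yet does \emph{not} prevent small diameter, since trees attain $D=\Theta(\log n)$. Hence the $\sqrt n$ geometric spread must be absorbed across only $O(\log n)$ edges, which is precisely what drives $\re$ to infinity and separates it from $\dc$ and $\pw$.
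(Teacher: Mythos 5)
Your proof is correct and follows essentially the same route as the paper's: a bounded-degree tree, a disc-packing argument showing the $n$ pairwise-separated points spread over Euclidean diameter $\Omega(\sqrt n)$, and the triangle inequality along a path of logarithmic length forcing a long edge. In fact your write-up, with the explicit bound $\re(G)\ge(\sqrt n-1)/(2\diam(G))$ and concrete constants, is tidier than the paper's version, which uses a full cubic tree and leaves its packing constant $C$ implicit.
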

\begin{proof}
Let $H_k$ denote a full cubic tree with $k$ layers, rooted at vertex $v_0$.
Let $V_i$ denote the number of vertices of layer $i$.
Then $V_0=1$, $V_1=3$, $V_2=3\cdot 2$, \dots, $V_k=3\cdot 2^{k-1}$
and $|V(H_k)|=V_0+V_1+ \dots + V_k = 3 \cdot 2^k-2$.
Let $\rho$ be  a representation of $H_k$, where $\re(H_k)$ is achieved.
Let $r= \min_{uv\in V_2(H_k)}\{d(\rho(u),\rho(v))\}.$
There exists at least one vertex $x$ of $V_k$ such that
$d(\rho(v_0),\rho(x)) > C \cdot r\sqrt{3 \cdot 2^k-2}=:R^\prime$.
Since  the graph distance $d(V_0,x)=k$ there exists an edge $pq$
on the path from $v_0$ to $x$ such that
$$
d(\rho(p),\rho(q)) \ge R^\prime/k = \frac{C \cdot r\sqrt{3 \cdot 2^k-2}}{k}.
$$

Therefore
$$
\rho(H_k) \ge \frac{d(\rho(p),\rho(q))}{r}
          \ge \frac{C \cdot \sqrt{3 \cdot 2^k-2}}{k}.
$$

For a given $R$ define $k_0$ to be the smallest $k$ for which
$C\cdot (3 \cdot 2^{k}-2) \ge R$ and define
$G_R=H_{k_0}$. Clearly, $G_R$ has maximum degree at most 3 and $\re(G_R) > R$.
\end{proof}

\section{One-dimensional restrictions}\label{sec:1dim}

\noindent
It is interesting to consider the restrictions of these three parameters to a
line instead of the plane.  That is, every vertex gets mapped to a point on the
real line instead of being mapped to a point in $\mathbb{R}^2$ and the definitions
are analogous to those given by Equations (\ref{eq:dc})-(\ref{eq:res}).
We denote the corresponding parameters by $\dc_1(G)$, $\pw_1(G)$ and $\re_1(G)$.

It turns out that the one-dimensional variant of the dilation coefficient
is strongly related to the circular chromatic number of the graph,
the ``line-width" is  strongly related to the chromatic number of the graph,
while the one-dimensional variant of the resolution coefficient coincides with the graph's bandwidth, defined as
$$
\bw(G):=\min_{\pi:V\to\{1,\ldots,n\}, \textrm{bij.}}\max_{uv\in E}|\pi(u)-\pi(v)|\,,
$$
where $n=|V(G)|$.

Given a graph $G = (V,E)$ and $c\ge 1$, a {\it circular $c$-coloring} is a mapping
$f:V\to [0,c)$ such that for every edge $uv\in E$, it holds that $1\le |f(u)-f(v)|\le c-1$.
The {\it circular chromatic number} $\chi_c(G)$ of $G$ is defined as
$$\chi_c(G) = \inf\{c\,:\,G \textrm{ admits a circular $c$-coloring} \}\,.$$
It is known that the infimum in the definition above is always attained (see, e.g.,~\cite{Zhu.2001}).

\begin{thm}\label{thm-1dim}
For every graph $G$, the following holds:

      \begin{enumerate}
        \item[$(i)$] $\dc_1(G) = \chi_c(G)-1$
        \item[$(ii)$] $\pw_1(G) = \chi(G)-1$.
        \item[$(iii)$] $\re_1(G)=\bw(G)$.
      \end{enumerate}
\end{thm}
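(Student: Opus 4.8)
The plan is to prove the three equalities in Theorem~\ref{thm-1dim} by establishing, for each part, the two inequalities $\le$ and $\ge$ via a correspondence between one-dimensional representations and the respective combinatorial coloring/labeling. The guiding principle is that a representation $\rho:V\to\bR$ can, after an appropriate scaling, be interpreted as an assignment of real labels to vertices, and the metric conditions in the definitions of $\dc_1$, $\pw_1$, $\re_1$ translate directly into the defining constraints of $\chi_c$, $\chi$, and $\bw$ respectively. In each case I will normalize the minimal relevant distance to $1$ (the minimal edge length for $\dc_1$ and $\pw_1$, the minimal pairwise distance for $\re_1$), since the parameters are scale-invariant ratios.

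For part~$(i)$, I would argue that a circular $c$-coloring is essentially a linear arrangement where edges have length between $1$ and $c-1$. Given an NED representation achieving $\dc_1(G)$, scale so the shortest edge has length $1$; then the longest edge has length $\dc_1(G)$, so every edge $uv$ satisfies $1\le|\rho(u)-\rho(v)|\le\dc_1(G)$. Translating the image into $[0,\dc_1(G))$ (it fits, since the diameter of the vertex set is at most the longest edge length in a connected piece — here I must be slightly careful about disconnected graphs and isolated vertices, but the constraints only involve edges) yields a circular $(\dc_1(G)+1)$-coloring, giving $\chi_c(G)-1\le\dc_1(G)$. Conversely, a circular $c$-coloring $f:V\to[0,c)$ with $1\le|f(u)-f(v)|\le c-1$ is directly an NED representation on the line with shortest edge $\ge1$ and longest edge $\le c-1$, so $\dc_1(G)\le\chi_c(G)-1$. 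I expect the subtlety here to be reconciling the ``circular'' wrap-around in $\chi_c$ with the genuinely linear image of $\rho$; the condition $|f(u)-f(v)|\le c-1$ (rather than a cyclic distance) is exactly what makes the linear representation faithful, so this should go through.

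For part~$(ii)$, a representation achieving $\pw_1(G)$, scaled so the shortest edge has length $1$, has all vertices within an interval of length $\pw_1(G)$ (the diameter), with every edge of length $\ge1$. Partitioning $[0,\pw_1(G)]$ into half-open intervals of length $1$ yields at most $\lfloor\pw_1(G)\rfloor+1$ color classes, each an independent set since two points in the same length-$1$ interval are at distance $<1$ and hence non-adjacent; this gives $\chi(G)-1\le\pw_1(G)$. For the reverse, a proper coloring with $\chi(G)$ colors placed at integer points $0,1,\dots,\chi(G)-1$ gives diameter $\chi(G)-1$ and all edges of length $\ge1$, so $\pw_1(G)\le\chi(G)-1$. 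The only care needed is that $\pw_1(G)$ need not be an integer a priori, but the interval-partition argument still bounds $\chi(G)$ appropriately, and I would verify the edge case $\chi(G)\le 2$ separately.

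For part~$(iii)$, I would show that in the line, after scaling so the minimal pairwise distance is $1$, an optimal NVD representation may be assumed to place the vertices at distinct integer (indeed consecutive-integer) positions, because compressing the configuration to lattice positions $1,2,\dots,n$ can only decrease the longest edge length while keeping all pairwise distances $\ge1$; this reduces $\re_1(G)$ to $\min_\pi\max_{uv\in E}|\pi(u)-\pi(v)|=\bw(G)$. The reverse inequality is immediate: the optimal bandwidth labeling $\pi$ is itself an NVD representation with minimal pairwise distance $1$ and longest edge $\bw(G)$. The main obstacle I anticipate is justifying rigorously that the optimal one-dimensional NVD representation can be taken to have vertices at the integer points $1,\dots,n$ with no loss — that is, that ``squeezing'' arbitrary real positions (subject to pairwise distance $\ge1$) down to consecutive integers never increases the maximum edge length. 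I would make this precise by sorting the vertex positions and noting that successive gaps are all $\ge1$, so replacing the sorted positions by $1,2,\dots,n$ (which have successive gaps exactly $1$) does not increase any pairwise distance; hence $\re_1(G)\ge\bw(G)$. This monotone ``gap-contraction'' argument is the crux of the whole theorem and the one step I would write out most carefully.
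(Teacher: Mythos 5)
Parts $(ii)$ and $(iii)$ of your proposal are correct and follow essentially the paper's own route: for $(ii)$ your partition of the image into half-open unit intervals is the paper's floor-coloring $f(v)=\lfloor\rho^*(v)\rfloor$, and for $(iii)$ your ``gap-contraction'' argument (sorted positions have consecutive gaps $\ge 1$, so replacing them by $1,\dots,n$ never increases an edge length while keeping all pairwise distances $\ge 1$) is precisely the paper's argument via the bijection $\pi(v_i)=i$.

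Part $(i)$, however, has a genuine gap in the direction $\chi_c(G)-1\le \dc_1(G)$. You take an optimal NED representation $\rho$, scaled so that the shortest edge has length $1$, and claim that its image can be \emph{translated} into $[0,\dc_1(G))$, justifying this by ``the diameter of the vertex set is at most the longest edge length in a connected piece.'' That justification is false, and the translation claim fails with it: the path $P_4$ placed at $0,1,2,3$ is an optimal representation (all edges have length $1$, and indeed $\dc_1(P_4)=1$, as witnessed also by the folded representation $0,1,0,1$), yet its image has diameter $3$ and fits in no interval of length $2=\dc_1(P_4)+1$, let alone of length $\dc_1(P_4)$. In general the image diameter of a connected component can exceed the longest edge by a factor on the order of the number of vertices. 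The missing idea is that one must genuinely \emph{wrap the line around a circle}: set $c=\dc_1(G)+1$ and $f(v)=\rho(v)-c\left\lfloor\rho(v)/c\right\rfloor$, and then---this is the real content---verify that this modular reduction preserves the edge constraints $1\le|f(u)-f(v)|\le c-1$. For an edge $uv$ with $\rho(u)\le\rho(v)$, the bound $\rho(v)-\rho(u)\le c-1$ forces $\left\lfloor\rho(u)/c\right\rfloor \le \left\lfloor\rho(v)/c\right\rfloor\le\left\lfloor\rho(u)/c\right\rfloor+1$, and in the carry case one checks that $f(u)-f(v)=\rho(u)-\rho(v)+c$ still lies in $[1,c-1]$. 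This case analysis is exactly the core of the paper's proof of $(i)$; your hedge about ``reconciling the circular wrap-around'' identifies the right subtlety, but the resolution you offer (that no wrap-around is needed because the image fits in one window) is incorrect.
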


\begin{proof}
Let $G=(V,E)$ be a graph.

\begin{sloppypar} $(i)$
Let $c = \chi_c(G)$ and let $f$ be a circular $c$-coloring of $G$.
Then, $f$ defines a non-edge-degenerate one-dimensional representation of $G$ such that $\min_{uv\in E}|f(u)-f(v)| \ge 1$ and $\max_{uv\in E}|f(u)-f(v)| \le c-1$. It follows that
$\frac{\max_{uv\in E}|f(u)-f(v)|} {\min_{uv\in E}|f(u)-f(v)|}\le c-1$, implying $\dc_1(G) \le \chi_c(G)-1$.
\end{sloppypar}

\begin{sloppypar}
Conversely, let $\rho^*:V\to \mathbb{R}$ be a non-edge-degenerate
one-dimensional representation of $G$ that achieves the minimum in the definition of $\dc_1(G)$. Without loss of generality, we may assume that
$\min_{uv\in E}|\rho^*(u)-\rho^*(v)| = 1$ and that $\min_{v\in V}\rho^*(v) = 0$.
Let $c = \dc_1(G)+1$, and define $f:V\to \mathbb{R}$ as follows:
\end{sloppypar}

For all $v\in V$,
$$f(v) = \rho^*(v)-c\cdot\left\lfloor\frac{\rho^*(v)}{c}\right\rfloor\,.$$
To show that $\chi_c(G)\le \dc_1(G)+1 = c$, it suffices to verify that $f$ is a circular $c$-coloring of $G$. Since $0\le \frac{\rho^*(v)}{c}-\left\lfloor\frac{\rho^*(v)}{c}\right\rfloor<1$, $f$ maps vertices of $G$ to the interval $[0, c)$. It remains to show that for every edge $uv\in E$, it holds that $1\le |f(u)-f(v)|\le c-1$. Notice that by the choice of $\rho^*$, we have
$1\le |\rho^*(u)-\rho^*(v)|\le c-1$ for every edge $uv\in E$.

Let $uv\in E$. We may assume that $\rho^*(u)\le \rho^*(v)$. Let $k = \left\lfloor\frac{\rho^*(u)}{c}\right\rfloor$ and $\ell = \left\lfloor\frac{\rho^*(v)}{c}\right\rfloor$.
The inequality $\rho^*(u)\le \rho^*(v)$ implies that $k\le \ell$. Moreover, the inequality
$\rho^*(v)\le \rho^*(u)+c-1$ implies $\ell\le k+1$.

If $\ell=k$, then $f(u) -f(v) = \rho^*(u)-\rho^*(v)$ and the inequalities $1\le |f(u)-f(v)|\le c-1$ follow.

Suppose now that $\ell = k+1$. Then $f(u) -f(v) = \rho^*(u)-\rho^*(v)+c$. Therefore, it follows from $\rho^*(v)-\rho^*(u)\le c-1$ that $f(u) -f(v)\ge 1$, implying $f(u)-f(v)=|f(u) -f(v)|\ge 1$.
Similarly, it follows from $\rho^*(v)-\rho^*(u)\ge 1$ that $f(u) -f(v)=|f(u) -f(v)|\le c-1$.

\begin{sloppypar}
This shows that $f$ is  a circular $c$-coloring of $G$, which implies that $\chi_c(G)\le \dc_1(G)+1$.
\end{sloppypar}

\medskip
\begin{sloppypar}
$(ii)$
Any $k$-coloring of $G$ with colors in the set $\{1,\ldots, k\}\subseteq \mathbb{R}$
defines a non-edge-degenerate one-dimensional representation $\rho$ of $G$ such that
$\frac{\max_{uv\in V_2(G)}|\rho(u)-\rho(v)|} {\min_{uv\in E}|\rho(u)-\rho(v)|}\le k-1$.
Therefore, $\pw_1(G) \le \chi(G)-1$.
\end{sloppypar}

\begin{sloppypar}
Conversely, let $\rho^*:V\to \mathbb{R}$ be a non-edge-degenerate
one-dimensional representation of $G$
that achieves the minimum in the definition of $\pw_1(G)$.
Without loss of generality, we may assume that
$\min_{uv\in E}|\rho^*(u)-\rho^*(v)| = 1$ and that $\min_{v\in V}\rho^*(v) = 1$.
Define $f:V\to \mathbb{R}$ as follows: For all $v\in V$, let $f(v) = \lfloor \rho^*(v)\rfloor$.
Then, $f$ is a proper $k$-coloring of $G$, where $k = \lfloor\pw_1(G)\rfloor+1$.
Therefore, $\chi(G)\le \lfloor\pw_1(G)\rfloor+1\le \pw_1(G)+1$.
\end{sloppypar}

\medskip
\begin{sloppypar}
$(iii)$
Any bijective mapping $\pi:V\to\{1,\ldots,n\}\subseteq \mathbb{R}$ defines a
non-vertex-degenerate one-dimensional representation of $G$ such that
$$\frac{\max_{uv\in E}|\pi(u)-\pi(v)|} {\min_{uv\in V_2(G)}|\pi(u)-\pi(v)|} = \max_{uv\in E}|\pi(u)-\pi(v)|\,.$$ Therefore, $\re_1(G) \le \bw(G)$.
\end{sloppypar}

\begin{sloppypar}
Conversely, let $\rho^*:V\to \mathbb{R}$ be a non-vertex-degenerate
one-dimensional representation of $G$
that achieves the minimum in the definition of $\re_1(G)$.
Without loss of generality, we may assume that
$\min_{uv\in V_2(G)}|\rho^*(u)-\rho^*(v)| = 1$.
Then $$\re_1(G) = \frac{\max_{uv\in E}|\rho^*(u)-\rho^*(v)|} {\min_{uv\in V_2(G)}|\rho^*(u)-\rho^*(v)|} = \max_{uv\in E}|\rho^*(u)-\rho^*(v)|\,.$$ Therefore, to show that $\bw(G) \le \re_1(G)$, it suffices to show that
$\bw(G)\le \max_{uv\in E}|\rho^*(u)-\rho^*(v)|$.
\end{sloppypar}

Order the vertices of $V=\{v_1,\ldots, v_n\}$ according to the increasing values of their images: $$\rho^*(v_1)< \rho^*(v_2)<\cdots<\rho^*(v_n)\,,$$
and let $\pi(v_i) = i$ for all $i\in \{1,\ldots,n\}$.
This defines a bijective mapping $\pi:V\to \{1,\ldots,n\}$.

\begin{sloppypar}
Let $uv\in E$, and assume that $i := \pi(u)< j := \pi(v)$. By the definition of $\pi$ and since
$\min_{xy\in V_2(G)}|\rho^*(x)-\rho^*(y)| = 1$, we infer that  $|\rho^*(u)-\rho^*(v)|\ge j-i$.
On the other hand, $|\pi(u)-\pi(v)| = j-i$, which implies that
$|\pi(u)-\pi(v)|\le |\rho^*(u)-\rho^*(v)|$. Therefore, $\max_{uv\in E}|\pi(u)-\pi(v)|\le \max_{uv\in E}|\rho^*(u)-\rho^*(v)|$, implying
$\bw(G)\le \max_{uv\in E}|\pi(u)-\pi(v)|\le \max_{uv\in E}|\rho^*(u)-\rho^*(v)| = \re_1(G)$.
\end{sloppypar}
\end{proof}

\begin{proposition}\label{prop:1-dim}
For every graph $G$,
$$\lceil \dc_1(G)\rceil = \pw_1(G)\le \re_1(G)\,.$$
\end{proposition}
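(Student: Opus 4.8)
The plan is to prove the two assertions separately, in each case pairing an easy inequality that falls straight out of the definitions with a ``folding'' construction for the reverse direction. Throughout I would lean on Theorem~\ref{thm-1dim}, and in particular on the fact that $\pw_1(G)=\chi(G)-1$ is a nonnegative \emph{integer}, which is what makes the ceiling appear.

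For the equality $\lceil\dc_1(G)\rceil=\pw_1(G)$, I would first note that $\dc_1(G)\le\pw_1(G)$. This is the one-dimensional analogue of the inequality $\dc(G)\le\pw(G)$ from Proposition~\ref{prop:1}, with the identical proof: since $E(G)\subseteq V_2(G)$, enlarging the numerator from $\max_{uv\in E}$ to $\max_{uv\in V_2}$ can only increase each ratio. As $\pw_1(G)$ is an integer, this upgrades to $\lceil\dc_1(G)\rceil\le\pw_1(G)$. For the reverse inequality I would take a non-edge-degenerate $\rho\colon V\to\mathbb{R}$ achieving $\dc_1(G)$, scaled so that $\min_{uv\in E}|\rho(u)-\rho(v)|=1$, so the longest edge has length $p:=\dc_1(G)$, and then fold: set $\tau(v)=\lfloor\rho(v)\rfloor\bmod(\lceil p\rceil+1)$, taking values in $\{0,\dots,\lceil p\rceil\}$. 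The one computation to check is that on each edge $uv$ the integer $\lfloor\rho(v)\rfloor-\lfloor\rho(u)\rfloor$ lies in $\{1,\dots,\lceil p\rceil\}$ (lower bound: two reals at distance $\ge 1$ cannot share a unit integer cell; upper bound: $\lfloor\rho(v)\rfloor-\lfloor\rho(u)\rfloor<(\rho(v)-\rho(u))+1\le p+1$), hence it is never a nonzero multiple of $\lceil p\rceil+1$ and $\tau$ is non-edge-degenerate. Since $\tau$ has integer values spanning a range of size $\lceil p\rceil$ and every edge has length $\ge 1$, its plane-width ratio is at most $\lceil p\rceil$, giving $\pw_1(G)\le\lceil\dc_1(G)\rceil$ and thus equality.

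For $\pw_1(G)\le\re_1(G)$ I would start from a non-vertex-degenerate $\rho^*$ achieving $\re_1(G)$, scaled so that $\min_{uv\in V_2}|\rho^*(u)-\rho^*(v)|=1$, rank the vertices by their images, and let $\pi$ be the resulting bijection onto $\{1,\dots,n\}$. Exactly as in the proof of Theorem~\ref{thm-1dim}$(iii)$, consecutive images differ by at least $1$, so $b:=\max_{uv\in E}|\pi(u)-\pi(v)|\le\re_1(G)$. Folding $\pi$ modulo $b+1$, i.e.\ $\sigma(v)=\pi(v)\bmod(b+1)$, yields by the same divisibility argument (on edges $1\le|\pi(u)-\pi(v)|\le b<b+1$) a non-edge-degenerate representation with integer values in a range of size $b$ and all edges of length $\ge 1$; therefore $\pw_1(G)\le b\le\re_1(G)$.

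The step I expect to need the most care is $\pw_1(G)\le\re_1(G)$, and it is worth flagging that this is a genuinely one-dimensional phenomenon: in the plane $\pw$ and $\re$ are incomparable (as the examples of Section~\ref{sec:basic} show), so nothing here can be inherited from Proposition~\ref{prop:1}. The obstacle is that the two ratios are ``dual'' — one maximises over edges and minimises over all pairs, the other does the opposite — so a single representation cannot simply be reused; the remedy is to first extract a linear layout of small bandwidth from the optimal resolution representation and only then fold it modulo $b+1$. Both reverse inequalities ultimately run on the same mechanism: fold an optimal linear representation modulo a suitable integer to produce a bounded-range, coloring-like representation, and verify by a one-line divisibility check that no edge collapses.
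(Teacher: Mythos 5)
Your proposal is correct, and its first half coincides with the paper's own argument: the easy inequality $\dc_1(G)\le\pw_1(G)$ combined with integrality of $\pw_1(G)=\chi(G)-1$ gives $\lceil\dc_1(G)\rceil\le\pw_1(G)$, and your folding $\tau(v)=\lfloor\rho(v)\rfloor\bmod(\lceil\dc_1(G)\rceil+1)$ is exactly the paper's map $\rho'(v)=\lfloor\rho^*(v)\rfloor \bmod k$ with $k=\lceil\dc_1(G)\rceil+1$, verified by the same divisibility check. Where you genuinely diverge is the inequality $\pw_1(G)\le\re_1(G)$: you extract a linear layout $\pi$ with $b:=\max_{uv\in E}|\pi(u)-\pi(v)|\le\re_1(G)$ from an optimal resolution representation (re-running the ranking argument inside the proof of Theorem~\ref{thm-1dim}$(iii)$) and then fold $\pi$ modulo $b+1$, in effect reproving the classical bound $\chi(G)\le\bw(G)+1$ in representation language. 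The paper is shorter here: it notes that $\dc_1(G)\le\re_1(G)$ follows by the same one-line argument as $\dc(G)\le\re(G)$ in Proposition~\ref{prop:1} (an optimal resolution representation is in particular non-edge-degenerate, and shrinking the index set of the denominator's minimum only increases the ratio), that $\re_1(G)=\bw(G)$ is an integer, hence $\lceil\dc_1(G)\rceil\le\re_1(G)$, and then invokes the equality just proved. This also shows that your cautionary remark that nothing in this step ``can be inherited from Proposition~\ref{prop:1}'' is too pessimistic: the inheritance works fine for the pair $(\dc_1,\re_1)$, and integrality of $\re_1$ does the rest; the planar obstruction (incomparability of $\pw$ and $\re$) vanishes in one dimension precisely because $\re_1=\bw$ is integral. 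Your route buys a self-contained, constructive proof of $\pw_1(G)\le\re_1(G)$ that does not depend on the first half of the proposition; the paper's route buys brevity by reusing both the first half and Proposition~\ref{prop:1}.
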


\begin{proof}
In the same way as the inequality  $\dc(G)\le \pw(G)$
from Proposition~\ref{prop:1}, one can prove the inequality
$\dc_1(G)\le \pw_1(G)$. Since $\pw_1(G) = \chi(G)-1$ by Theorem~\ref{thm-1dim},
$\pw_1(G)$ is always integral, therefore $\lceil \dc_1(G)\rceil \le \pw_1(G)$.

To see that $\pw_1(G)\le \lceil \dc_1(G)\rceil$, consider a non-edge-degenerate one-dimensional representation $\rho^*:V\to \mathbb{R}$ of $G$
that achieves the minimum in the definition of $\dc_1(G)$. Without loss of generality, we may assume that
$\min_{uv\in E}|\rho^*(u)-\rho^*(v)| = 1$ and that $\min_{v\in V}\rho^*(v) = 0$.
Let $k := \left\lceil \dc_1(G)\right\rceil+1$, and
define $\rho':V\to \{0,1,\ldots, k-1\}\subseteq \mathbb{R}$ as follows: For all $v\in V$, let $\rho'(v) = \left\lfloor \rho^*(v)\right \rfloor(\!\!\!\!\mod k)$.

 \begin{sloppypar}
For every $uv\in E$, we have $|\rho'(u)-\rho'(v)| \ge 1$. Indeed: if
$\rho'(u)=\rho'(v)$ then \hbox{$|\rho^*(u)-\rho^*(v)|\ge k> \dc_1(G)$}, contrary to the choice of $\rho^*$ and the definition of $k$. Therefore, $\rho'$ is a non-edge-degenerate one-dimensional representation of $G$, and since the image of $\rho'$ is contained in the set $\{0,1,\ldots, k-1\}$, we have  $\max_{uv\in V_2(G)}|\rho'(u)-\rho'(v)|\le k-1 = \left\lceil \dc_1(G)\right\rceil$, implying that $\pw_1(G)\le \left\lceil \dc_1(G)\right\rceil$.
\end{sloppypar}

In the same way as the inequality  $\dc(G)\le \re(G)$ from Proposition~\ref{prop:1}, one can prove the inequality $\dc_1(G)\le \re_1(G)$. Since $\re_1(G) = \bw(G)$ by Theorem~\ref{thm-1dim}, $\re_1(G)$ is always integral, and consequently $\lceil \dc_1(G)\rceil \le \re_1(G)$.
\end{proof}

Theorem~\ref{thm-1dim} and Proposition~\ref{prop:1-dim} provide an alternative proof of the following well known relations:
\begin{itemize}
  \item The equality $\chi(G) = \left\lceil\chi_c(G)\right\rceil$ showing that the circular chromatic number is the refinement of the chromatic number~\cite{Vince.1988}.
  \item The inequality $\bw(G) \ge\chi(G)-1$, proved in~\cite{Chvatalova.Dewdney.Gibbs.Korthage.1975}.
\end{itemize}

Theorems~\ref{thm:chi-pw} and~\ref{thm-1dim}  together with Proposition~\ref{prop:1-dim} also imply that both dilation coefficient and plane-width are equivalent to their one-dimensional counterparts. On the other hand, the resolution coefficient is
not equivalent to its one-dimensional variant, the bandwidth. There exist graphs of unit resolution and arbitrarily large bandwidth:

\begin{example}
Let $G$ be the Cartesian product of two paths, $G=P_m\Box P_n$. (For the definition of the Cartesian product, see e.g.~\cite{ImrichKlavzar}.) By a result of
Hare {\it et al.}~\cite{Hare.Hare.Hedetniemi.1985},  the bandwidth of $G$
is equal to $\min\{m,n\}$. Representing the vertices of $G$ as points
$\{1,\ldots,m\}\times\{1,\ldots,n\}$ of the integer grid $\mathbb{Z}^2$ shows
that the resolution coefficient of $G$ is equal to 1.

The {\it local density} of a graph is  defined as
$\max_{v,r}[|N(v,r)|/(2r)]$, where $N(v,r)$ denotes the set of all vertices
at distance at most $r$ from a vertex $v$.  While the local density is a
lower bound for the bandwidth, this example shows that the local density
(or any increasing function of it) is not a lower bound for the resolution coefficient.
\end{example}

\section{Minor-closed classes}\label{sec:minor}

A graph $G$ is a {\it minor} of a graph $H$ if $G$ can be obtained from $H$ by a sequence of vertex deletions, edge deletions, and edge contractions. We say that a class of graphs is minor closed if it contains together with any graph $G$ all minors of $G$. In this section, we prove some results that link the resolution coefficient to minor-closed graph classes. First, we observe that all graphs of small enough resolution coefficient are planar.

\begin{proposition}\label{prop:re-planar}
If $\re(G) < \sqrt{2}$ then $G$ is planar.
\end{proposition}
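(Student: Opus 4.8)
The plan is to show that an optimal representation automatically yields a crossing-free straight-line drawing, so that $G$ is planar. First I would take a non-vertex-degenerate representation $\rho$ attaining $\re(G)$ and rescale it so that $\min_{uv\in V_2(G)}d(\rho(u),\rho(v))=1$. Then every pair of distinct vertices is at distance at least $1$, while every edge has length at most $\re(G)<\sqrt{2}$. It suffices to prove that the straight-line drawing of $G$ induced by $\rho$ has no two edges meeting except at common endpoints; then $\rho$ exhibits a plane straight-line drawing of $G$, so $G$ is planar.

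Next I would dispose of the degenerate intersections. If some vertex $a$ lay in the interior of an edge $cd$, then $d(\rho(c),\rho(a))+d(\rho(a),\rho(d))=d(\rho(c),\rho(d))<\sqrt{2}$, contradicting $d(\rho(c),\rho(a))\ge 1$ and $d(\rho(a),\rho(d))\ge 1$. The same estimate shows that two edges sharing an endpoint can meet only at that endpoint, since any further overlap would place a vertex in the interior of an edge. Hence the only remaining possibility for two distinct edges to intersect is a \emph{proper crossing}: edges $ab$ and $cd$ with four distinct endpoints whose relative interiors meet at a single point that is not a vertex.

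The heart of the argument is the following geometric lemma: if the segments $[\rho(a),\rho(b)]$ and $[\rho(c),\rho(d)]$ cross properly and the four distances $d(\rho(a),\rho(c))$, $d(\rho(a),\rho(d))$, $d(\rho(b),\rho(c))$, $d(\rho(b),\rho(d))$ are all at least $1$, then $\max\{d(\rho(a),\rho(b)),d(\rho(c),\rho(d))\}\ge\sqrt{2}$. A proper crossing forces the four images to be in convex position, with $\rho(a)\rho(b)$ and $\rho(c)\rho(d)$ the diagonals of the convex quadrilateral whose cyclic vertex order is $\rho(a),\rho(c),\rho(b),\rho(d)$. Since the interior angles of a quadrilateral sum to $2\pi$, at least one of them is $\ge\pi/2$; say the angle at $\rho(c)$, namely $\angle\,\rho(a)\rho(c)\rho(b)\ge\pi/2$. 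The law of cosines in the triangle $\rho(a)\rho(c)\rho(b)$ then gives
$$
d(\rho(a),\rho(b))^2\ge d(\rho(c),\rho(a))^2+d(\rho(c),\rho(b))^2\ge 2,
$$
so the diagonal $\rho(a)\rho(b)$ has length at least $\sqrt{2}$; the three remaining cases are symmetric and each forces one diagonal (an edge of $G$) to have length at least $\sqrt{2}$.

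Applying this lemma to any hypothetical properly crossing pair of edges contradicts the fact that every edge has length strictly less than $\sqrt{2}$. Therefore no two edges cross, the drawing is plane, and $G$ is planar. I expect the lemma to be the main obstacle, and the key insight that makes it work—while also explaining why $\sqrt{2}$ is the sharp threshold, attained by the unit square whose diagonals cross at right angles—is that a crossing always produces a convex quadrilateral with an interior angle of at least $\pi/2$, forcing the opposite diagonal to be long.
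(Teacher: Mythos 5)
Your proof is correct and follows essentially the same route as the paper's: normalize an optimal representation so all pairwise vertex distances are at least $1$, rule out vertices lying in edge interiors via the triangle inequality, and show that a proper crossing would force a convex quadrilateral one of whose diagonals (an edge of $G$) has length at least $\sqrt{2}$. The only difference is that you actually prove the key geometric fact (via the angle of at least $\pi/2$ and the law of cosines), which the paper dismisses as ``basic geometric arguments.''
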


\begin{proof}
Let $G = (V,E)$ such that $\re(G) < \sqrt{2}$, and consider a representation $\rho^*:V\to\mathbb{R}^2$ achieving the minimum in the definition of the resolution coefficient. As usual, we assume that
$\min_{uv\in V_2(G)}d(\rho^*(u),\rho^*(v))=1$.

First, observe that since $\re(G)< 2$, for every vertex $v\in V$ and every edge $e\in E$, it holds that $\rho^*(v)$ is not contained in the interior of the line segment $\rho^*(e)$.
Therefore, since $\rho^*$ is non-vertex-degenerate and maps no vertex to an interior of a line segment connecting the endpoints of an edge, it defines a drawing of $G$ in the plane with straight line segments. We claim that this is a planar drawing. Suppose not, and let $uv$ and $xy$ be two distinct edges of $G$ such that the interiors of line segments $\rho^*(u)\rho^*(v)$ and $\rho^*(x)\rho^*(y)$ have a point in common. Basic geometric arguments show that a longest diagonal in a convex 4-gon in the plane is at least a factor of $\sqrt{2}$ longer than its shortest edge. Therefore, we conclude that at least one of the two line segments $\rho^*(u)\rho^*(v)$ and $\rho^*(x)\rho^*(y)$ has length $\sqrt{2}$ or more, contrary to the assumption that $\re(G)<\sqrt{2}$.
\end{proof}

\begin{cor}\label{cor:re=dc=1<pw}
Every graph $G$ such that $\re(G) = \dc(G) =1< \pw(G)$ is a 4-chromatic planar unit distance graph.
\end{cor}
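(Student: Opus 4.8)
The plan is to extract each of the three asserted properties from exactly one of the three hypotheses, in each case reducing to a result already available in the paper.

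First I would dispose of the \emph{unit distance} property. The hypothesis $\dc(G)=1$ is, by the characterization recalled just after Proposition~\ref{prop:1}, precisely the statement that $G$ admits a representation in which all edges have the same length, i.e.\ that $G$ is a unit distance graph. So this conclusion is immediate and needs nothing further.

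Next I would establish \emph{planarity}. Since $\re(G)=1<\sqrt{2}$, Proposition~\ref{prop:re-planar} applies directly and yields that $G$ is planar. Again only a citation is required.

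Finally, the \emph{$4$-chromatic} conclusion is the one place where two facts must be combined. From $\pw(G)>1$ and Theorem~\ref{thm:pw-chi}(a) (which states $\pw(G)=1$ if and only if $\chi(G)\le 3$) I obtain the lower bound $\chi(G)\ge 4$, using that $\pw(G)\ge 1$ always holds by Proposition~\ref{prop:1}. For the matching upper bound I would invoke the planarity just established together with the Four Color Theorem, which gives $\chi(G)\le 4$. Combining the two inequalities yields $\chi(G)=4$, so $G$ is $4$-chromatic. The only genuine external ingredient, and hence the ``hard part,'' is the appeal to the Four Color Theorem for the upper bound on $\chi(G)$; every other step is a one-line reduction to a previously stated equivalence, so once planarity is in hand the argument closes at once.
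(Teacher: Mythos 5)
Your proposal is correct and follows essentially the same route as the paper's proof: unit distance from $\dc(G)=1$, planarity from $\re(G)<\sqrt{2}$ via Proposition~\ref{prop:re-planar}, and $4$-chromaticity by combining Theorem~\ref{thm:pw-chi} (ruling out $\chi(G)\le 3$ since $\pw(G)>1$) with the Four Color Theorem. The only cosmetic difference is your extra appeal to Proposition~\ref{prop:1} for $\pw(G)\ge 1$, which is not needed since $\pw(G)>1$ already gives $\pw(G)\neq 1$.
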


\begin{proof}
$G$ is unit distance since $\dc(G) = 1$. By Proposition~\ref{prop:re-planar}, $G$ is planar.
Since $\pw(G)>1$, Theorem~\ref{thm:pw-chi} implies that $G$ is not 3-colorable.
Finally, since $G$ is planar, it must be 4-chromatic by the Four Color Theorem.
\end{proof}

Notice that the inclusion is proper: there exist 4-chromatic planar unit distance graphs that are not of unit resolution coefficient, for example the Moser spindle (cf.~Example 1).

It turns out that the statement of Proposition~\ref{prop:re-planar} cannot be generalized to surfaces of higher genus, in the sense that the genus of a graph would be bounded from above by a function of its resolution coefficient. In fact, we show that:

\begin{thm}
For every $n$ there exists a graph $G$ with $\re(G) = \sqrt 2$ and such that $K_n$ is a minor of $G$.\end{thm}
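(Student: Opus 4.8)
The plan is to realize $K_n$ as a minor of a suitable subgraph of a king graph on the integer lattice, which is automatically drawable with minimum pairwise distance $1$ and maximum edge length $\sqrt 2$. Since a $K_n$-minor for a larger value of $n$ also yields one for every smaller value, it suffices to treat $n\ge 5$. First I would fix a straight-line drawing of $K_n$ in the plane in general position, so that it has only finitely many edge crossings, each crossing being a transversal intersection of exactly two edges with no three edges concurrent. I would then discretize this drawing onto a very fine integer lattice: each vertex of $K_n$ is sent to a distinct lattice point, and each edge $ab$ is replaced by a lattice path $P_{ab}$ joining the images of $a$ and $b$ whose consecutive steps are king moves, that is, lattice vectors of Euclidean length $1$ or $\sqrt 2$. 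The discretization is to be arranged so that distinct paths are internally vertex-disjoint, and so that at each crossing of the original drawing the two paths cross via the two diagonals of a single unit cell (these diagonals meet at the centre of the cell but use four distinct lattice corners). I would then let $G$ be the union of all the paths $P_{ab}$, which by construction is a subgraph of the king graph on the lattice.

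Granting such a construction, the theorem follows from two observations. For the upper bound, the lattice embedding is a non-vertex-degenerate representation $\rho$ of $G$ in which every pair of distinct vertices is at distance at least $1$ (they are distinct lattice points) while every edge has length $1$ or $\sqrt 2$; hence $\re(G)\le \sqrt 2$. For the lower bound, contracting each path $P_{ab}$ to a single edge between the images of $a$ and $b$ turns $G$ into $K_n$, so $K_n$ is a minor of $G$; in particular $G$ has a $K_5$-minor and is therefore non-planar. By the contrapositive of Proposition~\ref{prop:re-planar}, non-planarity gives $\re(G)\ge \sqrt 2$. Combining the two bounds yields $\re(G)=\sqrt 2$, while $K_n$ remains a minor of $G$, as required.

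The hard part will be the discretization: producing lattice paths that are internally vertex-disjoint even though the underlying drawing of $K_n$ has crossings. The key point is that two straight lattice paths can cross without sharing a lattice vertex only if they cross strictly between lattice points, and the cleanest way to arrange this is to localize each crossing inside a single unit cell, routing one of the two paths along one diagonal of that cell and the other path along the other diagonal. To set this up at every crossing simultaneously I would first apply a generic rotation to guarantee general position, then choose the lattice fine enough that the finitely many crossings lie far apart; near each crossing I would locally reroute the two paths involved so that they traverse the two diagonals of one common cell (a bounded local detour, which is negligible on a sufficiently fine lattice), while away from the crossings the paths are routed monotonically through pairwise-disjoint lattice regions. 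Verifying that these choices can be made compatibly, so that no two paths meet except at a shared endpoint, is the one genuinely technical step; it is a routine but slightly fiddly grid-drawing argument, and once it is in place the minor relation and the two distance bounds are immediate.
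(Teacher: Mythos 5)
Your crossing gadget (routing the two curves through the two diagonals of a single unit cell), the subdivision-to-lattice-points step giving minimum pairwise distance $1$ and maximum edge length $\sqrt 2$, and the lower bound via non-planarity and the contrapositive of Proposition~\ref{prop:re-planar} all match the paper's proof. But there is a genuine gap at the very first step of your construction: you keep the images of the $n$ branch vertices of $K_n$ as single lattice points, each with $n-1$ internally vertex-disjoint king-move paths emanating from it. Your graph $G$ is by construction a subgraph of the king graph on $\mathbb{Z}^2$, whose maximum degree is $8$. Each path leaving the image of a branch vertex either has length one (ending at one of its $8$ king-neighbours) or has its first internal vertex among those $8$ neighbours, and internal vertex-disjointness forces all these to be distinct; hence at most $8$ paths can leave any lattice point. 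Since a branch vertex must emit $n-1$ paths, the construction you describe cannot exist for $n\ge 10$ — no amount of careful grid routing can fix this, because the obstruction is the degree bound of the host graph, not the fiddliness of the routing.

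The missing idea, which is precisely the opening move of the paper's proof, is to reduce the maximum degree \emph{before} discretizing: replace each vertex of $K_n$ by a cycle on $n-1$ vertices, joining each cycle vertex to (the gadget of) a different neighbour. This produces a cubic graph $G'$ that still contracts to $K_n$, and it is $G'$, not $K_n$, that gets drawn on the lattice. The paper then takes a rectilinear drawing of $G'$ (possible since the maximum degree is $3$), scales it so that all vertices, breakpoints and crossing points lie in $(4\mathbb{Z})^2$, rotates each crossing by $45^\circ$ — exactly your diagonal gadget — and subdivides every edge at all lattice points it passes through. With this one repair (branch vertices become contractible cycles or blobs rather than single points), the rest of your argument goes through and coincides with the paper's proof.
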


\begin{proof}
Let $n\ge 5$. We will construct a graph $G$ of maximum degree 3 with a $K_n$-minor and such that $\re(G)= \sqrt 2$.

First, we replace every vertex $v$ of $K_n$ with a cycle on $n-1$ vertices and connect each vertex of the cycle to a different neighbor of $v$ in $K_n$. Repeating this procedure for all vertices of $G$, we obtain a cubic graph $G'$ which can be contracted to $K_n$.

We now place the vertices of $G'$ into the plane so that all point coordinates are integer and no two vertices are mapped to the same point. Moreover, we draw the edges between them so that: (i) each edge is represented by a rectilinear curve consisting of finitely many vertical and horizontal line segments, (ii) no two edges share a common line segment, (iii) at every point where two edges cross, they cross properly (i.e.,  they do not touch each other), and (iv) no vertex is contained in the interior of an edge. This can be done since the graph is of maximum degree~3. Moreover, all the coordinates of the breakpoints (points where a rectilinear curve bends) can be chosen to be rational-valued. By scaling the drawing appropriately, we then obtain a drawing of $G'$ such that all vertex coordinates as well as all coordinates of the crossing points and breakpoints are integer multiples of~4.

We now modify the obtained drawing by making a local modification at each crossing point. We essentially rotate each cross by $45^\circ$, as shown in Figure~\ref{fig:crossing}.

\begin{figure}[ht]
    \centering \includegraphics[width=50mm]{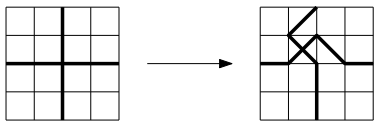}
\caption{The local modification at a crossing point; the grid represents a portion of the two-dimensional integer grid around the crossing point.}
\label{fig:crossing}
\end{figure}

Due to the assumption that in the previous drawing, horizontal and vertical lines connect points in $(4\mathbb{Z})^2$, performing such a local modification at each crossing point will not introduce any further crossings or touchings.

The resolution coefficient of $G'$ may be large. We now complete the proof by modifying $G'$ into a graph $G$ by subdividing edges. For every edge $e$ of $G'$, let $\ell(e)$ denote the curve representing $e$ in the above (modified) drawing. We insert a new vertex on each point in $\ell(e)\cap \mathbb{Z}^2$ that is not yet occupied by a vertex. We repeat this procedure for every edge $e$ of $G'$, and call the resulting graph $G$. Notice that the resulting drawing of $G$ defines a non-vertex-degenerate representation $\rho$ of $G$.

\begin{sloppypar}
Since all vertex coordinates are integer, we have $\min_{uv\in V_2(G)}d(\rho(u),\rho(v)) \ge 1$. On the other hand, due to the newly introduced vertices, it also holds that $\max_{uv\in E(G)}\{d(\rho(u),\rho(v))\}\le \sqrt{2}$. Therefore, the resolution coefficient of $G$ is at most $\sqrt{2}$. On the other hand, since $G$ contains $K_5$ as a minor, it is not planar, and hence $\re(G)\ge \sqrt{2}$ by Proposition~\ref{prop:re-planar}.
\end{sloppypar}

Finally, notice that as no new vertices were introduced at crossing points, $G$ can be contracted to $G'$ and hence to $K_n$, which implies that $G$ contains $K_n$ as a minor. \end{proof}

\section{Concluding remarks}
As we have shown, the relationship between the dilation coefficient, the plane-width and the resolution coefficient is non-trivial in many respects.
Although all these parameters somehow reflect the departure from unit-distance representations they do not behave in a uniform way.
The dilation coefficient and the plane-width are both equivalent to each other and also to their one-dimensional counterparts (in the sense that these parameters are bounded on precisely the same sets of graphs), while the resolution coefficient is not equivalent to either plane-width or dilation coefficient and also not to its one-dimensional analogue, the bandwidth.

There are several possibilities for further investigations. For instance, one could study the parameters $\dc$, $\pw$ and $\re$ in a more general framework where instead of $\mathbb{R}^2$ the host space is an arbitrary metric space; we have explored in this paper the case when the metric space is the real line. A related notion of colorings in distance spaces was studied by Mohar~\cite{Mohar.2001}.

There are several families of graphs for which at least two out of the three above parameters coincide. For example, the dilation coefficient and the plane-width coincide for all graphs $G$ such that $\chi(G)\in\{2,3,\omega(G)\}$. All three parameters coincide for complete graphs, for subgraphs of the Cartesian product of two paths, and for subgraphs of the strong product of two paths that contain a $K_4$. It would be interesting to get a complete classification of families of graphs for which at least two out of the three
parameters coincide.

Last but not least, we feel it would also be interesting to consider other measures of degeneracy of graph representations, such as the smallest angle or the smallest distance between an edge and vertices non-incident to it.

\section*{Acknowledgements}
This work was supported in part by
the research program P1-0294 of the Slovenian Agency for Research.

\bibliographystyle{plain}

\end{document}